\theoremstyle{plain}
\newtheorem{definition}{Definition}
\newtheorem{theorem}{Theorem}
\newtheorem{lemma}{Lemma}
\newtheorem{corollary}{Corollary}
\newcommand{\refT}[1]{Theorem~\ref{#1}}
\newcommand{\refS}[1]{Section~\ref{#1}}
\newcommand{\refL}[1]{Lemma~\ref{#1}}
\newcommand{\refC}[1]{Corollary~\ref{#1}}
\newcommand{\refD}[1]{Definition~\ref{#1}}
\newcommand{\refF}[1]{Fig.~\ref{#1}}
\newcommand{\stack}[2]{\genfrac{}{}{0pt}{}{#1}{#2}}
\def\R{{\mathbb R}}
\def\Q{{\mathbb Q}}
\def\Z{{\mathbb Z}}
\def\C{{\mathbb C}}
\def\D{{\mathbb D}}
\def\CP{\mathbb{CP}}
\def\calF{{\mathcal F}}
\def\calD{{\mathcal D}}
\def\P{\mathcal{P}}
\def\calW{\mathcal{W}}
\def\G{\Gamma}
\def\k{\varkappa}
\def\calE{{\mathcal E}}
\def\M{{\rm M}}
\def\gotM{{\mathfrak M}}
\def\Mbar{{\overline{M}}}
\def\gl{\mathfrak{gl}}
\def\Sl{\mathfrak{sl}}
\def\calO{{\mathcal O}}
\def\oH{\buildrel\circ\over H}
\def\oH1{\buildrel\circ\over H\kern-.02in{}^1}
\def\hookuparrow{{\cup\kern-.04in{}^\uparrow}}
\def\Re{\mathop{\rm Re}\nolimits}
\def\Res{\mathop{\rm Res}}
\def\Li{\mathop{\rm Li}\nolimits}
\def\csch{\mathop{\rm csch}\nolimits}
\def\vir{{\rm vir}}
\def\ev{{\rm ev}}
\def\bee{\begin{equation*}}
\def\eee{\end{equation*}}
\def\be{\begin{equation}}
\def\ee{\end{equation}}
\begin{document}

\title{Quantum Barnes function as the partition function of the resolved conifold}

\author{Sergiy Koshkin\\
Department of Mathematics,\\
Northwestern University, Evanston, IL 60208 USA\\
email: koshkin@math.northwestern.edu}

\date{}

\maketitle\thispagestyle{empty}

\begin{abstract}
We suggest a new strategy for proving large $N$ duality by interpreting Gromov-Witten, Donaldson-Thomas and Chern-Simons invariants of a Calabi-Yau threefold as different characterizations of the same holomorphic function. For the resolved conifold this function turns out to be the quantum Barnes function, a natural $q$-deformation of the classical one that in its turn generalizes Euler's gamma function. Our reasoning is based on a new formula for this function that expresses it as a graded product of $q$-shifted multifactorials. 
\end{abstract}

\section*{Introduction}

What is the topological string partition function of the resolved conifold\,? We should explain that heuristically one can assign string theories to each Calabi-Yau threefold and some of them such as {\it topological A-models} \cite{Wc}, only depend on its K\"ahler structure. Their topologically invariant amplitudes are then collected into a generating function called the partition function. Remarkably, this partition function may remain unchanged even if a threefold undergoes a topology changing transition \cite{GV}.

A traditional approach is to interpret the string partition function as the Gromov-Witten partition function. For the resolved conifold $X:=\calO(-1)\oplus\calO(-1)$ it was originally computed by Faber-Pandharipande \cite{FP}, see also \cite{Zhc}
\be\label{FaPan}
Z_X'(a;q)=\prod_{n=1}^\infty(1-aq^n)^n.
\ee
Here $a=e^{-t},\,q=e^{ix}$ and $t,x$ are known as the K\"ahler parameter and the string coupling constant respectively. In mathematical terms, they are just formal variables and 
\be\label{lnFaPan}
\ln Z_X'=\sum_{g=0}^\infty\sum_{d=1}^\infty\langle 1\rangle_{g,d}\ \,t^d\,x^{2g-2},
\ee
where $\langle 1\rangle_{g,d}$ is the Gromov-Witten invariant of genus $g$ degree $d$ holomorphic curves in the resolved conifold. 

The incompleteness of this answer does not reveal itself until one considers dualities that relate Gromov-Witten invariants to other invariants of Calabi-Yau threefolds. One may notice that 
\eqref{lnFaPan} is missing degree zero terms (hence the $'$\,). This is not a slip, they can not be packaged into a form as nice as \eqref{FaPan}. This was not considered much of a problem until Donaldson-Thomas theory \cite{Ktz,MNOP} came about since degree zero (constant) maps are trivial anyway. But apparently dualities have little tolerance for convenient omissions. For Gromov-Witten/Donaldson-Thomas duality to hold \eqref{FaPan} has to be augmented as 
\be\label{totFaPan}
Z_X=Z_X^0\,Z_X'\approx\M\,Z_X',
\ee
where 
\be\label{McM}
\M(q):=\prod_{n=1}^\infty(1-q^n)^{-n}
\ee
is the MacMahon function, classically known as the generating function of plane partitions \cite{Mc}. In all honesty, this is not quite true as $\ln\M(e^{ix})$ has some spurious terms in its expansion at $x=0$ and only accounts for genus $g\geq2$ terms correctly (see \refS{S3}). Also in Donaldson-Thomas theory one has $Z_X^{DT}=\M^2\,Z^{\prime\,DT}_X$, not $Z_X=\M\,Z_X'$. In a recent reformulation of the Donaldson-Thomas theory \cite{PT} the {\it reduced partition function} $Z^{\prime\,DT}$ is even defined directly and the Mac-Mahon function is banished altogether. Let us disregard this minor discrepancy for now since even answer \eqref{totFaPan} is incomplete.

This becomes apparent in light of another duality of Calabi-Yau threefolds, large $N$ duality. This one relates Gromov-Witten invariants of the resolved conifold to Chern-Simons invariants of the $3$-sphere. The usual formulation defines 
Chern-Simons theory as a gauge theory on a $U_N$ or $SU_N$ bundle over a real $3$-manifold $M$. Less recognized despite Witten's famous paper \cite{Wc} is the fact that it also gives invariants of Calabi-Yau threefolds. As Witten pointed out it can be viewed as a theory of open strings (holomorphic instantons at $\infty$ in his terminology) in the cotangent bundle $T^*M$ ending on its zero section. $T^*M$ is canonically a symplectic manifold (even K\"ahler if $M$ is real-analytic) with first Chern class $c_1(T^*M)=0$, i.e. a Calabi-Yau. In particular, $T^*S^3$ is diffeomorphic to a quadric $x^2+y^2+z^2+w^2=1$ in $\C^4$. One of the reasons this interpretation did not get much currency is that the strings in question are very degenerate, they are represented by ribbon graphs, not honest holomorphic curves. In fact, there are no honest holomorphic curves in $T^*M$ at all except for the constant ones \cite{K,Wc}. Another reason perhaps, is that open Gromov-Witten theory is still in its infancy and the powerful algebro-geometric techniques that dominate the field can not be directly applied. There are successful approaches that replace open invariants with relative ones \cite{BP,L3Z} but only as a tool for computing closed invariants. In the other direction, there exists a detailed if only formal correspondence between geometry of real oriented $3$-manifolds and Calabi-Yau threefolds and Donaldson-Thomas theory can be seen as a 'holomorphization' of Chern-Simons theory under this correspondence \cite{Ty}. Thus, comparing the Chern-Simons partition function $Z_{S^3}$ to $Z_X$ promises some useful insight. 

Once again ignoring some irrelevant prefactors, $Z_{S^3}$ can be written as $Z_{S^3}\approx\calE^{-z}\,Z_X$, where $z=itx^{-1}$ so that $a=q^z$ and 
\be\label{Eul}
\calE(q):=\prod_{n=1}^\infty(1-q^n)^{-1}
\ee
is the classical Euler generating function of ordinary partitions. At this point it is appropriate to introduce notation that allows one to write $Z_X',\M$ and $\calE$ uniformly. Let 
\be\label{qfac}
(a;q)_\infty^{(0)}:=1-a,\quad(a;q)_\infty^{(d)}:=\!\!\!\!\prod_{i_1,\dots,i_d=0}^\infty(1-aq^{i_1+\cdots+i_d})
\ee
be the {\it $q$-multifactorials} then (see \refS{S6})
$$
Z_X'(a;q)=(aq;q)_\infty^{(2)},\quad\M(q)=\frac1{(q;q)_\infty^{(2)}},\quad\calE(q)=\frac1{(q;q)_\infty^{(1)}}
$$
Using $q$ and $z$ as variables we see that 
\begin{align}\label{lineup}
Z_X' &=(q^{z+1};q)_\infty^{(2)}\notag\\
Z_X &\approx\frac1{(q;q)_\infty^{(2)}}\ \ (q^{z+1};q)_\infty^{(2)}\\
Z_{S^3} &\approx(q;q)_\infty^{(1)\,\,z}\ \frac1{(q;q)_\infty^{(2)}}\ \ (q^{z+1};q)_\infty^{(2)}\notag
\end{align}
After some thought one may sense a pattern here. We shall see in \refS{S6} that it makes sense to join one more factor to the product and consider
\be\label{entGq}
G_q(z+1):=\frac1{(q;q)_\infty^{(0)\ \frac{z(z-1)}2}}\ (q;q)_\infty^{(1)\,\,z}
\ \frac1{(q;q)_\infty^{(2)}}\ \ (q^{z+1};q)_\infty^{(2)}.
\ee
This $G_q$ is the {\it quantum Barnes function} of Nishizawa \cite{Ni1} and our candidate for the partition function of the resolved conifold. {\it All} factors above are required to make it transform as
\be\label{propGq}
G_q(z+1)=\G_q(z)\,G_q(z),
\ee
where $\G_q$ is Jackson's {\it quantum gamma function} deforming the classical one. This in turn satifies $\G_q(z+1)=(z)_q\,\G_q(z)$ with the so-called {\it quantum number} $(z)_q:=\frac{1-q^z}{1-q}$. This makes $G_q$ a deformation of the {\it classical Barnes function} that satifies \eqref{propGq} with $q$-s removed. 

The picture above is cute but not quite true and clear-cut identities \eqref{lineup} are spoiled by pesky disturbances discussed in Sections \ref{S3} and \ref{S5}. These disturbances are a large part of the reason why large $N$ duality is so hard to prove even in simple cases. Still, $G_q$ emerges as a common factor in Gromov-Witten, Donaldson-Thomas and Chern-Simons theories (\refT{Comparison}). One may notice that we conspicuously omitted the most famous of Calabi-Yau dualities, mirror symmetry. This is partly because local mirror symmetry is poorly developed, and partly because to the extent that its predictions can be divined \cite{FJ} they match the Gromov-Witten ones completely. There is a structural prediction of mirror symmetry that seems relevant. For {\it compact} Calabi-Yau threefolds $Z$ is predicted to have modular properties \cite{Dj}, i.e. obey transformation laws under $z\mapsto z+1$ and $z\mapsto-1/z$. For {\it open} threefolds like the resolved conifold only the first one survives and is expressed by \eqref{propGq}.

What are we to make of the above chain of augmentations? Perhaps, string theories on Calabi-Yau threefolds are only partial reflections of some hidden {\it master-theory}. Witten's candidate for such a theory is the mysterious M-theory living on a seven-dimensional manifold with $G_2$ holonomy that projects to various string theories on Calabi-Yau threefolds. Another  unifying view of Gromov-Witten and Donaldson-Thomas theories, via non-commutative geometry, also emerged recently \cite{Sz}. Different projections are equivalent even though they may live on topologically distinct threefolds and reflect the original each in their own way. So far we ignored these ways relying instead on magical changes of variables. It is time to dwell upon them a bit. This will also serve as our justification for spending so much ink on the resolved conifold.

The relation between Gromov-Witten and Donaldson-Thomas invariants is very simple \cite{MNOP}. For the resolved conifold we have 
\be\label{lnDT}
\ln Z_X'=\sum_{n=0}^\infty\sum_{d=1}^\infty\,(-1)^nD_{n,d}\ \,t^d\,q^n 
\ee
with $Z_X'$ the same as in \eqref{lnFaPan} and $D_{n,d}$ the Donaldson-Thomas invariants. In other words, in each degree $(-1)^nD_{n,d}$ are simply the Taylor coefficients of $Z_X'$ at $q=0$ while $\langle 1\rangle_{g,d}$ are the Laurent coefficients in $x$ corresponding to $q=1$ with $q=e^{ix}$. The relation with Chern-Simons invariants is more complicated. Traditionally, one has to take $q=e^{2\pi i/(k+N)}$, where $k,N$ are the two parameters of Chern-Simons theory, rank and level. They are positive integers making $q$ a root of unity. Not all roots of unity are covered in this way but more sophisticated formulations allow one to include any root of unity. Naively, if the duality conjectures hold Donaldson-Thomas invariants give us an expansion at $q=0$, Gromov-Witten invariants at $q=1$ and Chern-Simons invariants give values at roots of unity of more or less the {\it same function}. But only naively. First of all, Donaldson-Thomas generating functions are a priori only formal power series and may not have a positive radius of convergence. We need it to be at least $1$ to make a comparison. Things are nice in higher degree \cite{PT} but {\it in degree zero} it is exactly $1$ and every point of the unit circle is a singularity. This is remedied easily enough in the Gromov-Witten context since we can interpret $Z_X^0(e^{ix})=\sum_{g=0}^\infty\langle 1\rangle_{g,0}\,x^{2g-2}$ as an {\it asymptotic} expansion at the natural boundary (\refS{S3}). But Chern-Simons invariants are not graded by degree and the degree zero speck turns into a wooden beam spoiling the whole partition function that we wish to evaluate. With the resolved conifold being the simplest non-trivial case we get a preview of the difficulties that will arise in general. This brings us to a paradox: for large $N$ duality to even make sense the formal power series better converge to holomorphic functions extending to the unit circle or at least to roots of unity. This is {\it not} the case already for $Z_X$ and an additional factor in $Z_{S^3}$ appearing in \eqref{lineup} is needed to make it happen (see comments after \refC{Nniprod}). This is another reason to accept the quantum Barnes function as the completed partition function. 

Since conjecturally $Z_Y^0=\M^{\frac12\chi(Y)}$ for any Calabi-Yau threefold $Y$ \cite{MNOP} this phenomenon is likely to be general. The above discussion suggests that the {\it master-invariant} that manifests itself through dualities is a holomorphic function on the unit disk. The three theories we discussed showcase three different ways to package information about it. The dualities reduce to repackaging prescriptions. Physicists developed resummation techniques that transform generating functions one into another but they lead to unwieldy computations for the resolved conifold and do not produce conclusive results even for its cyclic quotients \cite{AKMVm}. Since repackaging involves transcendental substitutions, analytic continuation and asymptotic expansions -- things one does with functions and not with formal series -- it makes sense to identify the underlying holomorphic functions to establish a duality. This is the strategy of this paper and it distinguishes it from previous approaches \cite{AK,GV,Mar2} that use double expansions in genus and degree. This makes for a cleaner comparison of partition functions with a clear view of what matches and what {\it does not match} in them (\refT{Comparison}). It is also hoped that the idea generalizes to other threefolds.

The paper is organized as follows. \refS{S2} is a review of basic notions of Gromov-Witten theory with emphasis on generating functions. In particular, we note that free energy is a shorthand for the Gromov-Witten potential restricted to divisor invariants. The well-known irregularities in degree zero are then naturally explained. In \refS{S3} the MacMahon function is examined in detail to determine to what extent it can be viewed as the degree zero partition function of Gromov-Witten invariants. We describe resummation techniques used by physicists, and then recall an old but little known asymptotic for it due to Ramanujan and Wright adapting it to our context. Sections \ref{S4} and \ref{S5} give a description of the topological vertex and the Reshetikhin-Turaev calculus, diagrammatic models that compute Gromov-Witten and Chern-Simons partition functions respectively. Similarities between the two are specifically stressed. \refS{S5} ends by expressing both partition functions via the quantum Barnes function (\refT{Comparison}). Since this function and its higher analogs are relatively recent (1995) we give a self-contained exposition of their theory in \refS{S6} different from the author's \cite{Ni1}. In particular, we prove the  alternating formula \eqref{entGq} that connects $G_q$ to the Calabi-Yau partition functions and appears to be new (\refT{Altz}). In Conclusions we point out relations between Calabi-Yau dualities and holography and share some thoughts and conjectures inspired by the resolved conifold example. The Appendix lists basic properties of the Stirling polynomials needed in \refS{S6}.

\smallskip

{\bf Acknowledgements.} The author wishes to thank D.Auckly and D.Karp for sharing their thoughts and notes on Gromov-Witten and Chern-Simons theories, and suggesting numerous corrections and improvements to the original draft. I am also indebted to M.Mari\~no for his extensive email comments on the physical background of Calabi-Yau dualities. His influence can be felt in the overall mindset of this paper.

\section{Generating functions of\\ Gromov-Witten invariants}\label{S2}

There are a variety of generating functions appearing in the literature: Gromov-Witten potential, prepotential, truncated potential, partition function, free energy, etc. In this section we briefly review basic definitions from Gromov-Witten theory and relationships among some of the above generating functions. Perhaps, the only unconventional notion is that of divisor potential which leads most naturally to the free energy and the partition function. 

\smallskip

\noindent {\bf Stable maps.}\quad Let $X$ be a K\"ahler manifold of complex dimension $N$. We wish to consider holomorphic maps ${f\!:\ }\Sigma\to X$ of Riemann surfaces with $n$ marked points into $X$ that realize certain homology class $\alpha\in H_2(X,\Z)$. The space of such maps is denoted $M_{g,n}(X,\alpha)$. There is a natural (Gromov) topology on this moduli space but it is not compact in it. To get Gromov-Witten invariants we need to integrate over the moduli so we have to compactify. The appropriate compactification was discovered by Kontsevich and its elements are called stable maps. They are holomorphic maps from {\it prestable curves}, i.e. connected reduced projective curves with at worst ordinary double points (nodes) as singularities. A map is {\it stable} if its group of automorphisms is finite, i.e. there are only finitely many biholomorphisms $\sigma:\Sigma\to\Sigma$ satisfying $f\circ\sigma=f$ and $\sigma(p_i)=p_i$, where $p_1\dots,p_n$ are the marked points. Intuitively, we allow Riemann surfaces to degenerate by collapsing loops into points. Since only genus $0$ and $1$ curves have infinitely many automorphisms (M\"obius transformations and translations respectively) the stability condition is non-vacuous only for them and only if the map $f$ is trivial, i.e. maps everything into a point. It requires then that each genus $0(1)$ component have at least $3(1)$ special points, nodes or marked points. Under favorable circumstances the space of stable maps $\Mbar_{g,n}(X,\alpha)$ up to reparametrization is itself a closed K\"ahler orbifold of dimension  
\be\label{dimvir}
\dim_\C^\vir\Mbar_{g,n}(X,\alpha)=\langle c_1(X),\alpha\rangle-(N-3)(g-1)+n.
\ee
For instance, this is the case if $X=\CP^N$ and $g=0$. Above $c_1(X)$ is the first Chern class of the tangent bundle and $\langle,\rangle$ the cohomology/homology pairing. The notation anticipates that in general the moduli is neither smooth nor has the expected dimension so \eqref{dimvir} is called the {\it virtual dimension}. A deep result in Gromov-Witten theory asserts that despite the complications there is a cycle of expected dimension $[\Mbar_{g,n}(X,\alpha)]^\vir$ called the {\it virtual fundamental class} that one can integrate over.

\smallskip

\noindent {\bf Primary invariants.}\quad Presence of marked points allows one to define evaluation maps 
$$
\begin{aligned}
  \ev_i:\Mbar_{g,n}(X,\alpha) & \to\quad X\\
  f\qquad &\longmapsto f(p_i)
 \end{aligned}
$$
and pullback cohomology classes $\gamma_i$ from $X$ to $\Mbar_{g,n}(X,\alpha)$. These pullbacks are called the {\it primary classes} on $\Mbar_{g,n}(X,\alpha)$ \cite{Gz,KM}. The {\it primary Gromov-Witten invariants} are 
\be\label{prim}
\langle \gamma_1\cdots\gamma_n\rangle_{g,\alpha}:=\int_{[\Mbar_{g,n}(X,\alpha)]^\vir}\,
\ev_1^*(\gamma_1)\cup\cdots\cup\ev_n^*(\gamma_n),
\ee
where $\cup$ is the usual cup product and the integral denotes pairing with $[\Mbar_{g,n}(X,\alpha)]^\vir$. Again under favorable circunstances the primary invariants have an enumerative interpretation. Namely, $\langle \gamma_1\cdots\gamma_n\rangle_{g,\alpha}$ is the number of genus $g$ holomorphic curves in a class $\alpha\in H_2(X,\Z)$ passing through generic representatives of cycles Poincare dual to $\gamma_1,\dots,\gamma_n$ \cite{AK,KVq}. In general, the enumerative interpretation fails and $\langle \gamma_1\cdots\gamma_n\rangle_{g,\alpha}$ are only rational numbers, this is always the case for Calabi-Yau manifolds. Most of the primary invariants are zero for dimensional reasons. Indeed, the complex degree of the integrand in \eqref{prim} is $\frac12(\deg\gamma_1+\cdots+\deg\gamma_n)$ and for the integral to be non-zero it should be equal to the virtual dimension \eqref{dimvir}. There are other natural classes on $\Mbar_{g,n}(X,\alpha)$ that lead to more general Gromov-Witten invariants, gravitational descendants and Hodge integrals \cite{AK,FP,Gz}, but we need not concern ourselves with them here.

It is convenient to arrange the primary invariants into a generating function \cite{KVq}. To this end we note that they are linear in insertions $\gamma_i$ and we can recover all of them from $\langle1\rangle_{g,\alpha}$ and those with insertions chosen from an integral basis $h_1,\dots,h_m$ in $H^+(X,\Z):=\oplus_{n>0}H^n(X,\Z)$. One may worry about torsion but torsion classes are not represented by holomorphic curves and can be ignored. Thus any $\langle \gamma_1\cdots\gamma_n\rangle_{g,\alpha}$ is a linear combination of $\langle h_1^{p_1}\cdots h_m^{p_m} \rangle_{g,\alpha}$\ , where the 'powers' $p_i$ stand for repeating $h_i$ that many times. Introduce formal variables $t_1,\dots,t_m$ for each element of the basis. Heuristically, they represent (minus) K\"ahler volumes of $h_1,\dots,h_m$ and are called {\it K\"ahler parameters,} especially in physics literature. Analogously, let $\xi_1,\dots,\xi_k$ be a linear basis in $H_2(X,\Z)$ and $Q_1,\dots,Q_k$ the corresponding formal variables. We write $\langle h_1^{p_1}\cdots h_m^{p_m} \rangle_{g,\vec{d}}$\ \ with $\vec{d}:=(d_1,\dots,d_k)$ for short when $\alpha=d_1\xi_1+\cdots+d_k\xi_k$. The numbers $d_1,\dots d_k$ are called {\it degrees}. Finally, we need one more variable $x$, the {\it string coupling constant}, to incorporate genus. The {\it primary Gromov-Witten potential} (relative to the above bases choices) is 
\be\label{primpot}
\calF(t_1,\dots,t_m;Q_1,\dots,Q_k;x):=\sum_{g=0}^\infty\sum_{\stack{p_1,\dots,p_m=0}{d_1,\dots,d_k=0}}^\infty\!\!\!
\langle h_1^{p_1}\cdots h_m^{p_m} \rangle_{g,\vec{d}}\ \frac{t_1^{p_1}\cdots t_m^{p_m}}{p_1!\cdots p_m!}\,
Q_1^{d_1}\cdots Q_k^{d_k}\,x^{2g-2}. 
\ee
This particular choice of a generating function is by no means obvious and is inspired by two-dimensional topological quantum field theory. The power $2g-2$ instead of just $g$ has in mind the Euler characteristic $-(2g-2)$ of a genus $g$ Riemann surface. For $X$ K\"ahler $\calF$ is defined as at least a formal power series in $\Q[[t_j,Q_i,x]]$ \cite{Gv2}. Under a change of bases $\langle h_1^{p_1}\cdots h_m^{p_m} \rangle_{g,\vec{d}}$ transforms as a tensor. One may entertain oneself by writing a tensor potential that is an invariant, see \cite{KVq}. In \cite{Gz,KM} a more general Gromov-Witten potential is considered that incorporates gravitational descendants and accordingly has more formal variables.

Let $X:=\calO(-1)\oplus\calO(-1)$ be the resolved conifold, the sum of two tautological line bundles over $\CP^1$. Being a vector bundle over $\CP^1$ it is homotopic to its base and has the same homology and cohomology. In particular, $H_2(X,\Z)=\Z[\CP^1]$ and $H^\bullet(X,\Z)=\Z[h]/(h^2)$, where $h$ is the Poincare dual to the class of a point in $\CP^1$. Thus, $H^+(X,\Z)$ is spanned by $h$ and $H_2(X,\Z)$ is spanned by $\xi:=[\CP^1]$, the fundamental class of $\CP^1$. Hence we need only one $t$ and one $Q$ variable. The primary potential simplifies to 
\be\label{resconpot}
\calF(t;Q;x):=\sum_{p,d,g=0}^\infty \langle h^p\rangle_{g,d}\ \frac{t^p}{p!}\,Q^d\,x^{2g-2}. 
\ee

\smallskip

\noindent {\bf Divisor equation and free energy.}\quad We will be interested not even in all primary invariants but only in those corresponding to combinations of {\it divisor classes}, elements of $H^2(X,\Z)$. Divisor invariants turn out to be most relevant to large $N$ duality. In non-compact manifolds the name is misleading since there is no Poincare duality. For example, the hyperplane class of $\CP^1$ is a divisor class in $\calO(-1)\oplus\calO(-1)$ despite the fact that it is not dual to any divisor. But in closed manifolds divisor classes are precisely Poincare duals to divisors, cycles of complex codimension one.  Invariants $\langle h_1^{p_1}\cdots h_m^{p_m} \rangle_{g,\vec{d}}$\ \ containing only divisor classes can be reduced to $\langle1\rangle_{g,\vec{d}}$ \ \ using the so-called divisor equation. The latter is one of the universal relations among Gromov-Witten invariants coming from universal relations among moduli spaces of stable maps with the same target $X$. One of them is \cite{Gz,KM}
\be\label{forget}
\pi^*[\Mbar_{g,n-1}(X,\alpha)]^\vir=[\Mbar_{g,n}(X,\alpha)]^\vir,
\ee
where $\Mbar_{g,n}(X,\alpha)\xrightarrow[\pi]{}\Mbar_{g,n-1}(X,\alpha)$ is the map forgetting the last marked point. Its consequence is the {\it divisor equation} 
\be\label{div}
\langle h\,\gamma_1\cdots\gamma_n\rangle_{g,\alpha}=h(\alpha)\langle \gamma_1\cdots\gamma_n\rangle_{g,\alpha},
\ee
where $h\in H^2(X,\Z)$ and $\gamma_i$ are arbitrary. There are two exceptions to the validity of \eqref{forget} and hence \eqref{div}, both in degree zero. If $\alpha=0$ then $\Mbar_{g,n}(X,0)$ consists of constant maps. The stability condition requires domains of stable maps in this case to be themselves stable, not just prestable. But when $g=0(1)$ a stable curve must have at least $3(1)$ marked points so the spaces of curves $\Mbar_{0,0}, \Mbar_{0,1}, \Mbar_{0,2}, \Mbar_{1,0}$ are empty. However, $\Mbar_{0,3}, \Mbar_{1,1}$ are not and \eqref{forget} fails for $(g,n)=(0,3),(1,1)$. 

Since $H^2(X,\Z)\simeq H_2(X,\Z)$ modulo torsion and $\xi_1,\dots,\xi_k$ form a basis in $H_2(X,\Z)$ there are precisely $k$ basis elements in $H^2(X,\Z)$. We assume without loss of generality that $h_1,\dots,h_k$ are the ones and that they are dual to $\xi_1,\dots,\xi_k$, i.e. $h_i(\xi_j)=\delta_{ij}$. The divisor equation may now be used to flush all the insertions out of the divisor invariants. By induction from \eqref{div}
\be\label{flush}
\langle h_1^{p_1}\cdots h_k^{p_k}\rangle_{g,\vec{d}}=
d_1^{p_1}\cdots d_k^{p_k}\,\langle1\rangle_{g,\vec{d}}\ ,
\ee
assuming $\vec{d}\neq0$ to avoid low-genus problems in degree zero. Define the {\it truncated divisor potential} $\calF_{div}'(t_1,\dots,t_k;Q_1,\dots,Q_k;x)$ as in \eqref{primpot} but restricting the sum to $p_1,\dots,p_k$ and $\vec{d}\neq0$. Using \eqref{flush} we compute
\begin{multline*}
\calF_{div}'(t_1,\dots,t_k;Q_1,\dots,Q_k;x) =\sum_{g=0}^\infty\sum_{\vec{d}\neq0}\langle 1\rangle_{g,\vec{d}}\ 
Q_1^{d_1}\cdots Q_k^{d_k}\,x^{2g-2}\!\!\!\!\!\!
\sum_{p_1,\dots,p_k=0}^\infty\!\!\!\!\!\frac{(d_1t_1)^{p_1}\cdots (d_mt_m)^{p_m}}{p_1!\cdots p_m!}\\
=\!\sum_{g=0}^\infty\sum_{\vec{d}\neq0}\langle 1\rangle_{g,\vec{d}}\ 
Q_1^{d_1}\cdots Q_k^{d_k}\,x^{2g-2}\,e^{d_1t_1}\cdots e^{d_kt_k}\!
=\!\!\sum_{g=0}^\infty\sum_{\vec{d}\neq0}\langle 1\rangle_{g,\vec{d}}\ 
(Q_1e^{t_1})^{d_1}\cdots (Q_ke^{t_k})^{d_k}\,x^{2g-2}\!\!.
\end{multline*}
Obviously, as far as divisor invariants go $Q_1,\dots,Q_k$ are redundant and we can set them equal to $1$. This naturally leads to another generating function \cite{KL,MNOP}:
\begin{definition}\label{trunc}
The reduced Gromov-Witten free energy is 
\be\label{etrunc}
F'(t_1,\dots,t_k;x):=\sum_{g=0}^\infty\sum_{\vec{d}\neq0}\langle 1\rangle_{g,\vec{d}}\ 
e^{d_1t_1}\cdots e^{d_kt_k}\,x^{2g-2}. 
\ee
Its exponent $Z'(t_1,\dots,t_k;x):=\exp(F'(t_1,\dots,t_k;x))$ is called the reduced Gromov-Witten partition function. We write $F_X', Z_X'$ when the target manifold needs to be indicated.
\end{definition}
The reduced free energy is non-zero only if $\langle c_1(X),\alpha\rangle-(N-3)(g-1)=0$ for some class $\alpha\neq0$, see \eqref{dimvir}. If $X$ is a Calabi-Yau then $c_1(X)=0$ and if in addition it is a threefold then also $N=3$ and the non-triviality condition holds for all classes and genera. For a toric Calabi-Yau $X$ the reduced partition function $Z_X'$ is the quantity directly computed by the topological vertex algorithm \cite{AKMVv,Kh,L3Z,Mar2}.

\smallskip

\noindent {\bf Degree zero.}\quad The moduli spaces $\Mbar_{g,n}(X,0)$ consist of stable maps mapping stable curves into points. Therefore they split \cite{FP}
\be\label{split}
\Mbar_{g,n}(X,0)=\Mbar_{g,n}\times X.
\ee
This reduces degree zero invariants to integrals over the spaces of curves and over $X$. The divisor equation \eqref{div} still holds for $n\geq4(2)$ for genus $g=0(1)$ and for all $n$ in higher genus. Moreover, since $\alpha=0$ now it directly implies that all the divisor invariants vanish except possibly for those that can no longer be reduced. Therefore, in genus $g\geq2$ the only surviving invariants are $\langle 1\rangle_{g,0}$ and in genus $0,1$ we are left with $\langle h_i^3\rangle_{0,0}, \langle h_i^2h_j\rangle_{0,0}, \langle h_ih_jh_l\rangle_{0,0}$ and $\langle h_i\rangle_{1,0}$ respectively. There is automatically no dependence on $Q_i$ so the degree zero divisor potential is the same as the degree zero free energy (cf. \cite{P}):
\begin{multline}
F^0(t_1,\dots,t_k;x):=\calF^0(t_1,\dots,t_k;x)\\
=\left(\sum_{i=1}^k\langle h_i^3\rangle_{0,0}\,\frac{t_i^3}6+
\sum_{i\neq j}\langle h_i^2h_j\rangle_{0,0}\,\frac{t_i^2t_j}2+
\sum_{i\neq j,j\neq l,l\neq i}\langle h_ih_jh_l\rangle_{0,0}\ t_it_jt_l\right)\frac1{x^2}\\
+\sum_{i=1}^k\langle h_i\rangle_{1,0}\,t_i+\sum_{g=2}^\infty\langle1\rangle_{g,0}\,x^{2g-2}.
\end{multline}
Note that degree zero genus $0(1)$ terms are the only parts of the free energy depending on powers of $t_i$ rather than just their exponents $e^{t_i}$. When $X$ is {\it compact} these terms reflect its classical cohomology, namely \cite{P}  
\begin{align}\label{classic}
\langle h_ih_jh_l\rangle_{0,0} &=\int_X h_i\cup h_j\cup h_l \notag\\
\langle h_i\rangle_{1,0} &=-\frac1{24}\int_X h_i\cup c_2(X).
\end{align}
In particular, they vanish unless $X$ is a threefold. Higher genus contributions were computed in the celebrated paper of Faber-Pandharipande \cite{FP}
\begin{multline}\label{geq2}
\langle 1\rangle_{g,0}=\frac{(-1)^g\ \ |B_{2g}||B_{2g-2}|}{(2g-2)!\ 2g\,(2g-2)}\cdot\frac12\int_X (c_3(X)-c_1(X)\cup c_2(X))\\
=\frac{(-1)^{g-1}(2g-1)B_{2g}B_{2g-2}}{(2g-2)(2g)!}\cdot\frac12\int_X (c_3(X)-c_1(X)\cup c_2(X)),\quad g\geq2.
\end{multline}
Here $c_i(X)$ as before are Chern classes and $B_n$ are the Bernoulli numbers defined via a generating function \cite{AAR}
\be\label{Bern}
\frac{z}{e^z-1}=:\sum_{n=0}^\infty B_n\,\frac{z^n}{n!}. 
\ee
The only non-zero odd-indexed number is $B_1=-1/2$ and $B_0=1$, $B_2=1/6$, $B_4=-1/30$, $B_6=1/42$.

One sees from \eqref{geq2} that higher genus contributions all vanish for non-threefolds even when non-divisor invariants are taken into account because the Chern classes integrate to zero. However, genus $0(1)$ terms may still survive if $X$ has cohomology classes of appropriate degree to cup with $c_2(X)$ and each other. But the divisor invariants still vanish for dimensional reasons. Also note that \eqref{geq2} simplifies for Calabi-Yau threefolds since $c_1(X)=0$ and $\int_X c_3(X)=\chi(X)$ is the Euler characteristic of $X$. Thus, for compact Calabi-Yau threefolds
\be\label{CYgeq2}
\langle 1\rangle_{g,0}=\frac{(-1)^{g-1}(2g-1)B_{2g}B_{2g-2}}{(2g-2)(2g)!}\cdot\frac{\chi(X)}2,\quad g\geq2.
\ee

When $X$ is non-compact but $\alpha\neq0$ the moduli $\Mbar_{g,n}(X,\alpha)$ may still be compact. This usually happens if geometry forces images of stable maps to stay within a fixed compact subset of $X$, e.g. this is the case for the resolved conifold \cite{KL,K}. Then the virtual class is still defined and no new problems arise. However, if $\alpha=0$ factorization \eqref{split} forces $\Mbar_{g,n}(X,0)$ to be non-compact always. To the best of our knowledge no virtual class theory exists for non-compact moduli so technically $\langle \gamma_1\cdots\gamma_n\rangle_{g,0}$ for non-compact $X$ are not defined at all. 

Leaving the land of rigor and arguing like string theorists we notice that for Calabi-Yau threefolds \eqref{CYgeq2} still makes sense and can be taken as the 'right' answer even for non-compact $X$. This is consistent with a formal localization computation \cite{AK}. Unfortunately, for $g=0,1$ the invariants contain insertions and we really need to know how to interpret the integrals over $X$ in \eqref{classic}. In physics literature it is suggested that they correspond to integrals over "non-compact cycles" \cite{FJ} that can perhaps be interpreted as duals to compact cohomology cocycles \cite{BT}. We conclude that for the resolved conifold ($\chi(X)=2$) the degree zero free energy has the form
\be\label{F0rescon}
F^0_{\calO(-1)\oplus\calO(-1)}(t;x)=\frac{p_3(t)}{x^2}+p_1(t)+\sum_{g=2}^\infty (-1)^{g-1}\frac{(2g-1)B_{2g}B_{2g-2}}{(2g-2)(2g)!}\,x^{2g-2},
\ee
where $p_i$ are degree $i$ homogeneous polynomials with rational coefficients. We should mention that there are reasonable ways \cite{FJ} of assigning values to $p_3,p_1$ at least for local curves (see \cite{BP}) from equivariant and mirror symmetry viewpoints. For the resolved conifold they yield 
$$
F^0_{\calO(-1)\oplus\calO(-1)}(t;x)=\frac{t^3}{6}\frac{1}{x^2}+\frac{t}{12}+\sum_{g=2}^\infty (-1)^{g-1}\frac{(2g-1)B_{2g}B_{2g-2}}{(2g-2)(2g-2)!}\,x^{2g-2}
$$
and this function can be recovered from the mirror geometry. However, it appears that Donaldson-Thomas and Chern-Simons theories store classical cohomology information more crudely. We shall see that in genus $0,1$ this answer or even the general template \eqref{F0rescon} is {\it inconsistent} with exact duality (see discussion after \refC{MacAsIm}).
\begin{definition}
The (full) Gromov-Witten free energy is $F:=F^0+F'$ and the (full) Gromov-Witten partition function is $Z:=exp(F)=Z^0Z'$, where $F',Z'$ are reduced versions from \refD{trunc}. As before we write $F_X, Z_X$ to indicate the target manifold if necessary.
\end{definition}
For the resolved conifold we get from \eqref{etrunc}
\be\label{resconfren}
F_{\calO(-1)\oplus\calO(-1)}(t;x)=F^0_{\calO(-1)\oplus\calO(-1)}(t;x)+\sum_{\stack{g=0}{d=1}}^\infty \langle1\rangle_{g,d}\ e^{dt}\,x^{2g-2}.
\ee
The positive degree part converges to a holomorphic function in an appropriate domain of $t,x$ (recall that $t$ is a {\it negative} K\"ahler volume). The same holds for all toric Calabi-Yau threefolds and for them the partition function is given directly by the topological vertex \cite{AKMVv,Kh,L3Z,Mar2}. We will discuss the case of the resolved conifold in more detail in \refS{S4}. But the degree zero part is not so well behaved. The sum in \eqref{F0rescon} diverges and fast! By a classical estimate for Bernoulli numbers
\be\label{estBern}
\frac{(2g)!}{\pi^{2g}\,2^{2g-1}}<|B_{2g}|<\frac{(2g)!}{\pi^{2g}\,(2^{2g-1}-1)},\quad g\geq1,
\ee
and the general term in \eqref{F0rescon} grows factorially for any $x\neq0$. Coming up with a space of formal power series where the sum lives is neither difficult nor helpful. A helpful insight comes from the conjectural duality with  Donaldson-Thomas theory \cite{MNOP} that suggests to view \eqref{F0rescon} as an {\it asymptotic expansion} of a holomorphic function at a natural boundary point. The function in question is the Mac-Mahon function $\M(q)$, the point is $q=1$ and the relation to \eqref{F0rescon} is $q=e^{ix}$. We inspect this idea in the next section.

\section{Donaldson-Thomas theory\\ and the Mac-Mahon function}\label{S3}

In this section we clarify the relationship between degree zero Gromov-Witten invariants and the Mac-Mahon function 
\be\label{MacM} 
\M(q):=\prod_{n=1}^\infty(1-q^n)^{-n},\quad |q|<1.
\ee
This is a classical generating function for the number of plane partitions \cite{Al},\cite[I.5.13]{Mc}. More to the point, it appears in \cite{Ktz,MNOP} in the generationg function of degree zero Donaldson-Thomas invariants of Calabi-Yau threefolds.
 
\smallskip

\noindent {\bf Donaldson-Thomas invariants.}\quad 
The Donaldson-Thomas theory provides an alternative to the Gromov-Witten description of holomorphic curves in K\"ahler manifolds, utilizing ideal sheaves instead of stable maps. Intuitively, an {\it ideal sheaf} is a collection of local holomorphic functions vanishing on a curve. This avoids counting multiple covers of the same curve separately and Donaldson-Thomas invariants are integers unlike their Gromov-Witten cousins. Counting ideal sheaves is at least  formally analogous to counting flat connections (i.e. locally constant sheaves) on a real $3$-manifold, and the Donaldson-Thomas invariants are holomorphic counterparts of the Casson invariant in Chern-Simons theory \cite{Ty}.

The genus $g$ of a stable map is replaced in a Donaldson-Thomas invariant $D_{\kappa,\alpha}$ by the {\it holomorphic Euler characteristic} $\kappa$ of an ideal sheaf. As conjectured in \cite{MNOP} and proved in \cite{Li} the degree zero partition function of a Calabi-Yau threefold $X$ is given by 
\bee
Z_X^0(q):=\sum_{\kappa=0}^\infty D_{\kappa,0}\,q^{\kappa}=\M(-q)^{\chi(X)},
\eee
where as before $\chi(X)$ is the classical Euler characteristic. 

Since both kinds of invariants are meant to describe the same geometric objects one expects a close relationship between them. Indeed, it is proved in \cite{MNOP} for toric threefolds and conjectured for general ones that reduced partition functions of Donaldson-Thomas and Gromov-Witten theories are the same under a simple change of variables. This equality does not extend directly to degree zero but it is mentioned in \cite{MNOP} that the  Gromov-Witten $F^{0}$ is the asymptotic expansion of $\ln\M(e^{ix})^{\frac12\chi(X)}$ at $x=0$ (note the extra $1/2$ in the exponent). 

A quick look at \eqref{F0rescon} tells one that even for the resolved conifold this can be true at best for $g\geq2$ since no extra variables are involved in the Donaldson-Thomas function. We will see that this is the case but the {\it complete} asymptotic expansion involves some interesting extra terms that are perplexing from the Gromov-Witten point of view. However, the Mac-Mahon factor is exactly reproduced in Chern-Simons theory (\refL{ZqBarnes}). Moreover, with asymptotic expansions one has to specify not just a point but also a {\it direction} in the complex plane in which the expansion is taken, and the correct direction here is not the obvious (real positive) one. 

\smallskip

\noindent {\bf $\zeta$-resummation.} To avoid imaginary numbers we first consider $\ln\M(e^{-x})$ instead of $\ln\M(e^{ix})$. For motivation, we start with a provocative 'computation' that converts an expansion in powers of $e^{-x}$ into one in powers of $x$ for a simpler function: 
\be\label{provoc}
\frac{e^{-x}}{1-e^{-x}}=\sum_{n=1}^\infty e^{-nx}=\sum_{n=1}^\infty\sum_{k=0}^\infty\frac{(-nx)^k}{k!}
\,{'\!\!\!='}\,\sum_{k=0}^\infty\frac{(-x)^k}{k!}\sum_{n=1}^\infty\frac1{n^{-k}}
\,{'\!\!\!='}\,\sum_{k=0}^\infty(-1)^k\frac{\zeta(-k)}{k!}x^{k}.
\ee
The last two equalities are nonsense of course: the interchange of sums is illegitimate and $\sum_{n=1}^\infty\frac1{n^{-k}}=\sum_{n=1}^\infty n^k$ is (very) divergent. It certainly does not converge to $\zeta(-k)$ for positive $k$, although by definition $\zeta(s):=\sum_{n=1}^\infty\frac1{n^s},\ \Re s>1$ is the Riemann zeta function \cite{AAR}. Nonetheless, the end result is almost correct. Indeed, by definition of Bernoulli numbers \eqref{Bern}
\be\label{negBern}
\frac{e^{-x}}{1-e^{-x}}=\frac1x\frac{x}{e^x-1}=\frac1x\sum_{j=0}^\infty B_j\frac{x^j}{j!}
\ee
\be\label{zetaval}
\text{and }\zeta(-k)=-\frac{B_{k+1}}{k+1},\ k\geq1;\quad \zeta(0)=-1/2\ \cite{AAR},\text{ so}
\ee
\be\label{asBern}
\frac{e^{-x}}{1-e^{-x}}=\frac1x-\frac12+\sum_{j=2}^\infty\frac{B_j}{j!}\,x^{j-1}
=\frac1x-\frac12-\sum_{k=1}^\infty\frac{B_{k+1}}{(k+1)!}\,x^k=\frac1x+\sum_{k=0}^\infty(-1)^k\frac{\zeta(-k)}{k!}\,x^{k}.
\ee
In other words our 'computation' \eqref{provoc} only missed the first term $1/x$.

A similar feat can be performed with $\ln\M(e^{-x})$. First, we compute
\begin{multline}\label{resum}
\ln\M(e^{-x})=-\sum_{n=1}^\infty n\ln(1-e^{-nx})=\sum_{n=1}^\infty n\sum_{k=1}^\infty\frac{(e^{-nx})^k}k
=\sum_{k=1}^\infty\frac1k\sum_{n=1}^\infty n(e^{-kx})^n\\
=\sum_{k=1}^\infty\frac1k\frac{e^{-kx}}{(1-e^{-kx})^2}=\sum_{k=1}^\infty\frac1k\frac1{(e^{\frac{kx}2}-e^{-\frac{kx}2})^2}
=\sum_{k=1}^\infty\frac{\csch^2(\frac{kx}2)}{4k}.
\end{multline}
So far all the manipulations are legitimate assuming $x>0$, although they would not be if we used $e^{ix}$ instead of $e^{-x}$. Next recall the Laurent expansion at zero of $\csch^2$:
\be
\csch^2(z)=-\sum_{g=0}^\infty\frac{2^{2g}\,(2g-1)\,B_{2g}}{(2g)!}\,z^{2g-2}.
\ee
One can now pull the same trick as in \eqref{provoc} of interchanging sums and replacing divergent power sums of integers with zeta values. Namely,
\begin{multline}\label{Maczeta}
\ln\M(e^{-x})=-\sum_{k=1}^\infty\frac1{4k}\sum_{g=0}^\infty\frac{4\,(2g-1)\,B_{2g}}{(2g)!}\ 2^{2g-2}
\left(\frac{kx}2\right)^{2g-2}\\
\ {'\!\!\!='}\ -\sum_{g=0}^\infty\frac{(2g-1)\,B_{2g}}{(2g)!}\,x^{2g-2}\sum_{k=1}^\infty\frac1{k^{3-2g}}
\ {'\!\!\!='}\ -\sum_{g=0}^\infty\frac{(2g-1)\,B_{2g}\,\zeta(3-2g)}{(2g)!}\,x^{2g-2}\\
=\frac{\zeta(3)}{x^2}-\frac{\zeta(1)}{12}+\sum_{g=2}^\infty\frac{(2g-1)\,B_{2g}\,B_{2g-2}}{(2g-2)\,(2g)!}\,x^{2g-2},
\end{multline}
where we used \eqref{zetaval} in the last equality. This series is even more problematic than the one in \eqref{provoc} which at least made sense and converged for $|x|<2\pi$. Now not only does it diverge factorially (see \eqref{estBern}) but also $\zeta(1)$ makes no sense at all since $\zeta$ has a pole at $1$. Nonetheless, dropping the singular term $\frac{\zeta(3)}{x^2}$, the 'infinite constant' $-\,\frac{\zeta(1)}{12}$ and formally replacing $x$ by $-ix$ in the sum, we get exactly the higher genus Gromov-Witten free energy in degree zero \eqref{F0rescon}.

The procedure used in \eqref{provoc}, \eqref{Maczeta} can be traced back to Euler and in a more sophisticated guise is used in quantum field theory under the name of $\zeta$-resummation or $\zeta$-regularization \cite{Ac}. The amazing fact is not that this is reasonable to do in physics (one can argue that $\zeta(-k)$ has the same operational properties as the non-existent $\sum_{n=1}^\infty n^k$), but that it actually produces nearly mathematically correct answers. Unlike a physical situation, where a sensible answer is taken as a definition for an otherwise meaningless quantity, here we have an identity where both sides make perfect sense (as a holomorphic function and its asymptotic expansion respectively) and only the passage from left to right is odious.

\smallskip

\noindent {\bf Mellin asymptotics.}\quad
A fix is a well-known Mellin transform technique that not only takes care of singular terms, divergent expansions and infinite constants but even explains why the double blunder in \eqref{provoc} and \eqref{Maczeta} computes most of the asymptotic correctly \cite{FGD}. We use it here to make the relationship between the degree zero invariants and the MacMahon function precise. Recall that given an integrable function on $(0,\infty)$ with a possible pole at $0$ and polynomial decay at $\infty$ its {\it Mellin transform} is 
$$
\gotM f(s):=\int_0^\infty\,x^{s-1}f(x)\,dx.
$$
The transform is defined and holomorphic in the {\it convergence strip} $\Re s\in(\alpha,\beta)$, when $f(x)\sim O(x^{-\alpha})$ at $0$ and $\sim O(x^{-\beta})$ at $\infty$, assuming $\alpha<\beta$. It is most useful when $\gotM f$ admits a meromorphic continuation to the entire complex plane since location of the poles determines asymptotic behavior of the function at $0$ and $\infty$ (see \cite{FGD} and below). For example, $\gotM[e^{-nx}](s):=\G(s)/n^s$ in $\Re s\in(0,\infty)$ extends meromorphically with the poles of the gamma function located at $s=0,-1,-2,\dots$. Analogously, 
\be\label{MlnBern}
\gotM\left[\frac{e^{-x}}{1-e^{-x}}\right](s)=\sum_{n=1}^\infty\gotM[e^{-nx}](s)=
\sum_{n=1}^\infty\frac{\G(s)}{n^s}=\G(s)\zeta(s)\quad\text{in }\Re s\in(1,\infty)
\ee
extends with one additional zeta pole at $s=1$. 

The {\it inverse Mellin transform} recovers $f$ as 
\be\label{invMln}
f(x)=\frac1{2\pi i}\int_{c-i\infty}^{c+i\infty}\gotM f(s)\,x^{-s}\,ds\quad\text{for }c\in(\alpha,\beta)
\ee
assuming absolute integrability along $\Re s=c$. In the cases of interest to us all the poles are located on the real axis to the left of $\alpha$. 
\begin{figure}
\centering
\includegraphics[width=2truein]{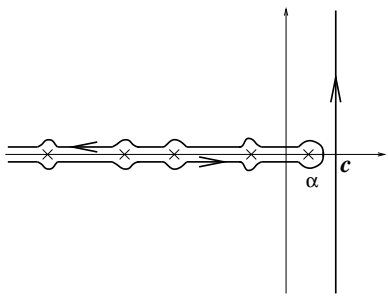} \caption{Barnes contour for Mellin transforms}\label{Barcont}
\end{figure}
If the transform satisfies appropriate growth estimates one can shift the integration contour in \eqref{invMln} to run counterclockwise along the real axis from $-\infty$ to $\alpha$ and back, \refF{Barcont}. This reduces 
\eqref{invMln} to a sum over residues at the poles by the Cauchy residue theorem:
\be\label{sumRes}
f(x)=\sum_{n=0}^\infty\ \Res_{s=-\gamma_n}\,[\gotM f(s)]\ x^{\gamma_n}.
\ee
If $\gamma_n$ are integers and the series converges $f$ must be real-analytic on $(0,\infty)$ with at worst a pole at $0$, and the residues give its Laurent coefficients at $0$. For example, one can compute the Taylor expansion of $e^{-x}$ at $0$ using that $\gotM[e^{-x}](s):=\G(s)$ and the poles $-\gamma_n=-n$ of $\G$ are simple with the residues $(-1)^n$ \cite{AAR}. However, in most cases the series \eqref{sumRes} diverges for all $x\neq0$ and \eqref{MlnBern} is such a case. Under analytic assumptions that we do not reproduce here the following weakening of \eqref{sumRes} is still true \cite{FGD}:
\begin{quote}
{\it If $-\gamma_n$ are order $m_n$ poles of (meromorphic continuation of) $\gotM f(s)$ and its Laurent expansions at $-\gamma_n$ have the form
$$
\gotM f(s)=\frac{A_{n0}}{s+\gamma_n}+\frac{A_{n1}}{(s+\gamma_n)^2}+\sum_{k=2}^{m_n-1}\frac{A_{nk}}{(s+\gamma_n)^{k+1}}
$$
then an asymptotic expansion of $f$ at $x=0$ is} 
\be\label{asMln}
f(x)\sim\sum_{n=0}^\infty\left(A_{n0}-A_{n1}\ln x+\sum_{k=2}^{m_n-1}\frac{(-1)^kA_{nk}}{k!}\ln^k x\right)x^{\gamma_n}.
\ee
\end{quote}
Now it becomes clear where the extra $1/x$ in \eqref{asBern} came from. In addition to gamma poles in \eqref{MlnBern} that produce terms $\frac{(-1)^n}{n!}\zeta(-n)x^n$ there is also a simple pole of $\zeta(s)$ with residue $1$ that gives $\G(1)\cdot1/x=1/x$. Thus, \eqref{asBern} is at least an asymptotic expansion of $e^{-x}/(1-e^{-x})$ at $0$. The fact that it actually converges to the function is a rare bonus. In general, even if \eqref{asMln} does converge it is not necessarily to the original function, see \cite{FGD}.

This technique extends to general {\it Fourier sums} (or harmonic sums) of the form 
\be\label{harsum}
f(x)=\sum_{k=1}^\infty a_k\,g(\omega_k x)
\ee
because their Mellin transforms can be easily expressed in terms of those of the {\it base} function $g$ \cite{FGD}. One can think of them as sums of generalized harmonics with {\it amplitudes} $a_k$ and {\it frequencies} $\omega_k$, the usual ones corresponding to $g(x)=e^{ix}, \omega_k=\pm k$. Indeed,
$$
\gotM f(s)=\sum_{k=1}^\infty a_k\,\int_0^\infty\,x^{s-1}g(\omega_k x)\,dx
=\sum_{k=1}^\infty \frac{a_k}{\omega_k^s}\,\int_0^\infty\,x^{s-1}g(x)\,dx=D(s)\gotM g(s),
$$
where $D(s):=\sum_{k=1}^\infty a_k/\omega_k^s$ is the {\it Dirichlet series} of the sum. If $D(s)$ is entire and $\gotM g(s)$ only has simple poles at $s=0,-1,-2,\dots$ then 
\bee
f(x)\sim\sum_{n=0}^\infty\ \Res_{s=-n}\,[\gotM g(s)]\,D(-n)\,x^n.
\eee
If moreover $g$ itself is entire and decays fast enough on $\R_+$ then $g(x)=\sum_{n=0}^\infty\,g_n\,x^n$, $\Res_{s=-n}\,[\gotM g(s)]=g_n$ and $f(x)\sim\sum_{n=0}^\infty g_n\,D(-n)\,x^n$. The same answer can be obtained by a (legitimate under the circumstances) interchange of sums in \eqref{harsum}:
$$
f(x)=\sum_{k=1}^\infty a_k\sum_{n=0}^\infty g_n(\omega_k x)^n
=\sum_{n=0}^\infty g_n\left(\sum_{k=1}^\infty a_k\,\omega_k^n\right)x^n
=\sum_{n=0}^\infty g_n\,D(-n)\,x^n
$$
In particular, this expansion is not just asymptotic but convergent. If $D(s)$ is not entire but only meromorphic the last two equalities fail. However, \eqref{asMln} still ensures that {\it formal interchange of sums gives the regular part of the asymptotic expansion correctly as long as $D$-poles are real-part positive\,!} This is precisely what happened in \eqref{provoc}.

\smallskip

\noindent {\bf Ramanujan-Wright expansion.}\quad 
The situation in \eqref{Maczeta} is more complicated. We compute from \eqref{resum}
$$
\gotM\left[\ln\M(e^{-x})\right](s)=\sum_{k=1}^\infty\frac1k\sum_{n=1}^\infty n\,\gotM[e^{-nkx}](s)
=\sum_{k=1}^\infty\frac1k\sum_{n=1}^\infty\frac{n\G(s)}{(nk)^s}
$$
Now assume that $\Re s$ is large enough for the double series to converge absolutely, e.g. $\Re s>2$, and proceed
\be\label{MacMln}
=\sum_{k,n=1}^\infty\frac{\G(s)}{n^{s-1}k^{s+1}}=
\G(s)\sum_{n=1}^\infty\frac1{n^{s-1}}\cdot\sum_{k=1}^\infty\frac1{k^{s+1}}=\G(s)\,\zeta(s-1)\,\zeta(s+1).
\ee
The extra zeta poles occur at $s-1,s+1=1$, i.e. $s=0,2$ and $s=0$ becomes a double pole. Formula \eqref{asMln} now yields an asymptotic expansion for $\ln\M(e^{-x})$ that we state as a theorem. This is a particular case of asymptotic expansions for analytic series obtained by Ramanujan who used a rough equivalent of Mellin asymptotics, the Euler-Maclaurin summation (see 
\cite[Theorem 6.12]{Brn}). Ramanujan's considerations were heuristic and in any case remained unpublished until much later. The first rigorous asymptotic for $\ln\M(e^{-x})$ is due to Wright \cite{Wr}. We sketch a proof for the convenience of the reader.
\begin{theorem}[Ramanujan-Wright]\label{MacAs} Let $\M(q):=\prod_{n=1}^\infty(1-q^n)^{-n},\ |q|<1$ be the MacMahon function. Then $\ln\M(e^{-x})$ has the Mellin transform $\gotM\left[\ln\M(e^{-x})\right](s)=\G(s)\,\zeta(s-1)\,\zeta(s+1),\ \Re s>2$ and its asymptotic expansion at $x=0$ along $\R_+$ is 
\be\label{Macas}
\ln\M(e^{-x})\sim\frac{\zeta(3)}{x^2}+\frac{\ln x}{12}+\zeta'(-1)
+\sum_{g=2}^\infty\frac{(2g-1)\,B_{2g}\,B_{2g-2}}{(2g-2)\,(2g)!}\,x^{2g-2}.
\ee
\end{theorem}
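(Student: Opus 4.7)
The plan is to invoke the Mellin asymptotics machinery described around \eqref{asMln}: compute the Mellin transform of $\ln\M(e^{-x})$, continue it meromorphically, and read off the asymptotic expansion from its poles. The author has essentially already done the first step in \eqref{MacMln}; my task is to justify it carefully and then perform a pole-by-pole analysis, with the one subtle feature being the double pole at $s=0$.

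For the Mellin transform, I would start from the intermediate expression in \eqref{resum}, namely
\bee
\ln\M(e^{-x})=\sum_{k,n=1}^\infty \frac{n}{k}\,e^{-nkx},
\eee
which converges absolutely for $x>0$. In the strip $\Re s>2$ the double series $\sum_{k,n} n\,\G(s)/(nk)^s$ converges absolutely, so Fubini permits term-by-term Mellin integration and yields $\gotM[\ln\M(e^{-x})](s)=\G(s)\zeta(s-1)\zeta(s+1)$. Since each of $\G$, $\zeta(s-1)$, $\zeta(s+1)$ is meromorphic on $\C$, so is the product. Standard estimates -- Stirling's formula $|\G(\sigma+it)|\sim\sqrt{2\pi}\,|t|^{\sigma-1/2}e^{-\pi|t|/2}$ together with polynomial growth of $\zeta$ in vertical strips -- give the decay needed to shift the contour in \eqref{invMln} to the Barnes contour of \refF{Barcont} and to apply the asymptotic statement \eqref{asMln}.

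The poles of $\G(s)\zeta(s-1)\zeta(s+1)$ are then inventoried. At $s=2$ the simple pole of $\zeta(s-1)$ gives residue $\G(2)\zeta(3)=\zeta(3)$, contributing $\zeta(3)\,x^{-2}$. At the odd negative integers $s=-1,-3,-5,\dots$ the simple $\G$-pole is annihilated by the trivial zero $\zeta(-2-2k)=0$ of $\zeta(s-1)$, so no term appears. At $s=-2k$ with $k\geq1$, $\G$ contributes $(-1)^{2k}/(2k)!=1/(2k)!$ while $\zeta(s-1)\zeta(s+1)$ equals $\zeta(-2k-1)\zeta(1-2k)=(-B_{2k+2}/(2k+2))(-B_{2k}/(2k))$; substituting $g=k+1$ and simplifying recovers exactly the $g\geq2$ tail of \eqref{Macas}.

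The only delicate point is the collision at $s=0$, where both $\G$ and $\zeta(s+1)$ have simple poles. Using $\G(s)=s^{-1}(1-\gamma s+O(s^2))$ and $\zeta(s+1)=s^{-1}(1+\gamma s+O(s^2))$, the cross terms in $\gamma$ cancel, giving $\G(s)\zeta(s+1)=s^{-2}+O(1)$ near $s=0$. Multiplying by the Taylor expansion $\zeta(s-1)=-\tfrac{1}{12}+\zeta'(-1)\,s+O(s^2)$ produces
\bee
\G(s)\zeta(s-1)\zeta(s+1)=-\frac{1}{12\,s^2}+\frac{\zeta'(-1)}{s}+O(1),
\eee
so in the notation of \eqref{asMln} we have $\gamma_n=0$, $A_{n1}=-1/12$, $A_{n0}=\zeta'(-1)$, contributing precisely $\zeta'(-1)+\tfrac{1}{12}\ln x$. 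Assembling the three classes of contributions gives \eqref{Macas}; the factorial divergence of the tail, guaranteed by \eqref{estBern}, is consistent with an asymptotic (rather than convergent) expansion, which is all \eqref{asMln} promises. The main obstacle is really just bookkeeping the $s=0$ double pole correctly, since everything else is a routine application of the Mellin machinery already set up in the text.
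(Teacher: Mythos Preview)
Your proposal is correct and follows essentially the same route as the paper's own proof: compute the Mellin transform as $\G(s)\zeta(s-1)\zeta(s+1)$, enumerate the contributing poles (the simple pole at $s=2$, the gamma poles at even negative integers surviving the trivial zeros of $\zeta(s-1)$, and the double pole at $s=0$), and read off the expansion via \eqref{asMln}. Your treatment of the $s=0$ collision, including the cancellation of the $\gamma$-terms in $\G(s)\zeta(s+1)=s^{-2}+O(1)$, matches the paper's computation exactly; the only difference is that you spell out the Fubini and growth-estimate justifications a bit more explicitly than the paper's sketch does.
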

\begin{proof} Recall that $\zeta(s)$ has "trivial zeros" at negative even integers \cite{AAR}. Poles of $\G(s)$ at negative odd integers are therefore canceled by zeros of $\zeta(s-1)$. Analytical assumptions needed for \eqref{asMln} to hold are satified here by the classical estimates for $\G$ and $\zeta$ \cite{AAR}. The contributing poles are:
\begin{itemize}
\item[{(i)}] Gamma poles at $s=-2,-4,\dots,-2g,\dots$ with residues $\frac{(-1)^{2g}}{(2g)!}\zeta(-2g-1)\zeta(1-2g)$;
\item[{(ii)}] Simple pole of $\zeta(s-1)$ at $s=2$ with residue $1\cdot\G(2)\zeta(3)=\zeta(3)$;
\item[{(iii)}] Double pole of $\G(s),\zeta(s+1)$ at $s=0$.
\end{itemize}
We have from the first two items and \eqref{zetaval}
$$
\frac{\zeta(3)}{x^2}+\sum_{g=1}^\infty\frac1{(2g)!}\zeta(-2g-1)\zeta(1-2g)\,x^{2g}
=\frac{\zeta(3)}{x^2}+\sum_{g=2}^\infty\frac{(2g-1)\,B_{2g}\,B_{2g-2}}{(2g-2)\,(2g)!}\,x^{2g-2}.
$$
To take care of the double pole we need more than just the residue. By the well-known properties of $\G$ and $\zeta$
\begin{align*}
\G(1+s) &=\ \ \ 1\ \ -\ \gamma s+O(s^2)\notag\\
\zeta(s) &=\frac1{s-1}+\gamma+\,O(s-1),
\end{align*}
where $\gamma$ is the Euler constant. Thus 
$$
\G(s)\,\zeta(s+1)=\frac{\G(s+1)\,\zeta(s+1)}s=\left(\frac1{s}-\gamma+O(s)\right)\left(\frac1{s}+\gamma+O(s)\right)
=\frac1{s^2}+O(1),\ \text{and}
$$
\begin{multline*}
\G(s)\,\zeta(s-1)\,\zeta(s+1)=\left(\frac1{s^2}+O(1)\right)\left(\zeta(-1)+\zeta'(-1)s+O(s^2)\right)\\
=\frac{\zeta(-1)}{s^2}+\frac{\zeta'(-1)}{s}+O(1)=\frac{-1/12}{s^2}+\frac{\zeta'(-1)}{s}+O(1).
\end{multline*}
By \eqref{asMln} the corresponding terms in the asymptotic expansion are $\ln x/12+\zeta'(-1)$ and it remains to combine the expressions.
\end{proof}
In hindsight, it is amusing how much of \eqref{Macas} is visible in the naive expression \eqref{Maczeta}: not just the regular part but also $\zeta(3)/x^2$ and even $1/12$ in front of the logarithm. The only hidden term is $\zeta'(-1)$, sometimes called the Kinkelin constant \cite{Al}, and for this reason perhaps it is usually missing in physical papers.

\smallskip

\noindent {\bf Stokes phenomenon and the natural boundary.}\quad
As already mentioned the relationship between $q$ and $x$ is $q=e^{ix}$ not $q=e^{-x}$. Replacing formally $x$ by $-ix$ in 
\eqref{Macas} we recover the infinite sum of \eqref{F0rescon} along with three extra terms
$$
-\frac{\zeta(3)}{x^2}+\frac{\ln(-ix)}{12}+\zeta'(-1).
$$
How legitimate is this substitution? Had \eqref{Macas} been a convergent Laurent expansion there would be no such question. But it is asymptotic and represents $\ln\M(e^{-x})$ only up to exponentially small terms (more precisely, "faster than polynomially small" but we follow the standard abuse of terminology). It is well-known that such expansions depend on a direction in the complex plane in which they are taken. As one crosses certain {\it Stokes lines} originating from the center of expansion exponentially small terms may become dominant and change the expansion drastically. This change is commonly known as the {\it Stokes phenomenon}. Moreover, for an asymptotic expansion in some direction to exist the function must be holomorphic in a punctured local sector containing this direction in its interior. Switching from $x$ to $-ix$ while keeping $x$ real positive forces us to approach $q=e^{i0}=1$ along the upper arc of the unit circle, i.e. along a purely imaginary direction. For an asymptotic expansion in this direction we need to have $\M(q)$ analytically continued beyond the unit disk $|q|<1$. But can it be  continued?

The definition \eqref{MacM} does not look very promising. In fact, it strongly suggests that $\M(q)$ has a singularity at each root of unity. But roots of unity are dense on the circle making it a {\it natural boundary} for $\M(q)$ and no analytic continuation exists. It turns out to be quite hard to turn this observation into a proof but Almkvist shows \cite{Al} that if $a/b$ is a proper irreducible fraction then 
\bee
\ln\M(e^{2\pi i\frac{a}{b}}e^{-x})\ \sim\ \ \frac{\zeta(3)}{b^3\,x^2}+\frac{b}{12}\ln x+O(1)
\eee
for real positive $x$. Thus, every root of unity is indeed singular and $|q|=1$ is the natural boundary.

This forces us to reconsider keeping $x$ real in $\ln\M(e^{ix})$. Should $x$ approach $0$ from the {\it positive imaginary} direction we can set $x=iy$ with $y>0$ and \refT{MacAs} gives us an asymptotic expansion in $y$. We can rewrite it as an expansion in $x$ of course as long as it is understood that $x$ in it is positive imaginary. This may seem like an underhanded trick but it is not. The natural domain of $\ln\M(e^{ix})$ is the upper half-plane and the only distinguished direction in its interior is the positive imaginary one.
\begin{corollary}\label{MacAsIm}
Asymptotic expansion of $\ln\M(e^{ix})$ at $x=0$ along $i\,\R_+$ is (taking the principal branch of the logarithm)
\begin{multline}\label{AsIm}
\ln\M(e^{ix})\sim-\frac{\zeta(3)}{x^2}+\frac{\ln(-ix)}{12}+\zeta'(-1)
+\sum_{g=2}^\infty(-1)^{g-1}\frac{(2g-1)\,B_{2g}\,B_{2g-2}}{(2g-2)\,(2g)!}\,x^{2g-2}\\
\sim-\frac{\zeta(3)}{x^2}+\frac{\ln x}{12}+\zeta'(-1)-\frac{\pi i}{24}
+\sum_{g=2}^\infty(-1)^{g-1}\frac{(2g-1)\,B_{2g}\,B_{2g-2}}{(2g-2)\,(2g)!}\,x^{2g-2}.
\end{multline}
\end{corollary}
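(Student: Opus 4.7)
The plan is to obtain the corollary as a direct change of variables from the Ramanujan--Wright expansion (Theorem~\ref{MacAs}), taking care to justify the passage from an asymptotic expansion along $\R_+$ to one along $i\R_+$.

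First I would set $y:=-ix$, so that when $x$ tends to $0$ along $i\R_+$, the parameter $y$ is real positive and tends to $0$ along $\R_+$; under this substitution $e^{ix}=e^{-y}$, so $\ln\M(e^{ix})=\ln\M(e^{-y})$. Since $\M(e^{ix})$ is holomorphic on the upper half-plane $\Im x>0$ (because $e^{ix}$ maps it biholomorphically into the open unit disk, the natural domain of $\M$), the sector condition needed to transport an asymptotic expansion along a ray by a holomorphic change of variables is automatic: Ramanujan--Wright gives an asymptotic along $\R_+$ in $y$, and $y\mapsto iy=x$ is a rotation sending $\R_+$ onto $i\R_+$, so the resulting expansion is valid along $i\R_+$ in the variable~$x$.

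Next I would substitute $y=-ix$ termwise into \eqref{Macas}. The singular term becomes $\zeta(3)/y^2=-\zeta(3)/x^2$; the logarithmic term becomes $\ln y/12=\ln(-ix)/12$; the constant $\zeta'(-1)$ is unchanged; and each monomial transforms as $y^{2g-2}=(-i)^{2g-2}x^{2g-2}=(-1)^{g-1}x^{2g-2}$, producing the alternating signs in the series. This gives the first displayed form of \eqref{AsIm}.

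Finally, to obtain the second line I would split $\ln(-ix)$ on the principal branch of the logarithm. For $x\in i\R_+$, writing $x=iy$ with $y>0$, we have $-ix=y\in\R_+$, so $\ln(-ix)=\ln y=\ln x-\ln i=\ln x-i\pi/2$, and dividing by $12$ yields the extra constant $-\pi i/24$ together with $\ln x/12$. Combining everything gives the second form of \eqref{AsIm}.

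The only delicate point, which I would flag rather than belabor, is the direction-dependence of asymptotic expansions across Stokes lines discussed in the paragraph preceding the corollary: one must verify that no Stokes line of $\ln\M(e^{ix})$ is crossed in passing from $y\in\R_+$ to $x\in i\R_+$ through this holomorphic change of variables. This is the case because the substitution is a single rigid rotation within the sector of holomorphy (the upper half-plane), not a rotation around the natural boundary $|q|=1$, and the expansion is thus transported intact.
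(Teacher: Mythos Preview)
Your proof is correct and follows exactly the approach the paper intends: the text immediately preceding the corollary explains that one sets $x=iy$ with $y>0$ (equivalently your $y=-ix$), so that $e^{ix}=e^{-y}$ and \refT{MacAs} applies directly; the corollary is then stated without further proof. Your termwise substitution and the splitting of $\ln(-ix)$ via the principal branch are accurate, and your remark on Stokes lines simply makes explicit what the paper's preceding discussion already justifies.
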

Comparing this to \eqref{F0rescon} one ought to be somewhat perplexed. If we are to take \eqref{AsIm} at face value then 
$p_3(t)=-\zeta(3)$, $p_1(t)=\zeta'(-1)-\pi i/24$ (?!) and there is no space for $\ln x/12$ at all. Aside from the fact that $p_i$-s are supposed to be homogeneous polynomials of the corresponding degree the numbers involved are not even rational, $\zeta(3)$ by Ap\'ery's famous result. Nevertheless, the MacMahon factor appears as is in the Chern-Simons partition function, see \refL{ZqBarnes}.

The disappearence of extra variables and appearence of irrationals suggests that some kind of averaging is involved. It would not explain $\ln x/12$ but we may guess that averaging of $p_1(t)$ is divergent and has to be regularized giving rise to an anomalous term. Why Donaldson-Thomas theory does not reproduce the degree zero contributions in low genus is beyond our expertise. However, from the Chern-Simons vantage point this ought to be expected. The idea of large $N$ duality is that the same string theory is realized on manifolds with different topology \cite{AK,Mar2}. However, the degree zero terms in genus $0,1$ are exactly the ones that record the {\it classical cohomology} of the target manifold, see \eqref{classic}. Although some relation between topologies of manifolds supporting equivalent string theories may be expected, the entire cohomology ring is certainly too much to survive a geometric transition. Therefore, these classical terms can not enter an invariant partition function except via averages that remain unchanged by such transitions.

\section{Topological vertex and\\ partition function of the resolved conifold}\label{S4}

This section and the next are to be read in conjunction. We review the salient points of two combinatorial models, the topological vertex \cite{AKMVv,Kh,L3Z,Mar2} and the Reshetikhin-Turaev calculus \cite{AK,T}, highlighting the differences but more importantly the parallels between them. The former computes the Gromov-Witten invariants of toric Calabi-Yau threefolds and the latter computes the Chern-Simons invariants of all closed $3$-manifolds. The reason to compare them is the conjectural large $N$ duality between the two. Both models encode their spaces into labeled diagrams and then assign values to them according to Feynman-like rules. However, the encoding and the rules are quite different despite intriguing correspondences. The reason we use the topological vertex instead of just summing up \eqref{resconfren} as in \cite{FP} is that it directly gives the partition function in correct variables and in an appealing form. Comparing the answer to the Chern-Simons one it becomes reasonable to express it in a closed form via the quantum Barnes function (\refT{Comparison}).
 
\smallskip

\noindent {\bf Toric webs.}\quad 
Just as the Reshetikhin-Turaev calculus \cite{AK,T} the topological vertex is a {\it diagrammatic state-sum model}. This means that geometry of a space is encoded into a diagram, a graph enhanced by additional data, and the value of an invariant is computed by summing over all prescribed labelings of the diagram. In the Reshetikhin-Turaev calculus the diagrams are link diagrams representing $3$-manifolds via surgery \cite{Rf,T}. In the topological vertex they are toric webs representing toric Calabi-Yau threefolds. 

A {\it toric web} is an embedding of a trivalent planar graph with compact and non-compact edges into $\R^2$ that satisfies some integrality conditions \cite{Kh,L3Z,Mar2}. Namely, vertices have integer coordinates and direction vectors of edges can be chosen to have integer coordinates. Moreover, if the direction vectors are chosen primitive (without a common factor in coordinates) any pair of them meeting at a vertex forms a basis of $\Z^2$ and every triple at a vertex if directed away from it, adds up to zero. Examples for the resolved conifold $\calO(-1)\oplus\calO(-1)$ and the local  $\CP^1\times\CP^1$ (i.e. the total space of $T^{1,0}(\CP^1\times\CP^1)$) are shown on \refF{torwebs}, where the primitive directions of non-compact edges are also indicated. Toric webs related by a $GL_2\Z$ transformation and an integral shift represent isomorphic threefolds. For this reason we did not label the vertices on \refF{torwebs}, one may assume that one of them is $(0,0)$ and all compact edges have the unit length. The toric web is a complete invariant of a toric Calabi-Yau. Indeed, the moment polytope of the torus action can be recovered from it \cite[4.1]{L3Z} and therefore the threefold itself up to isomorphism by the Delzant classification theorem \cite{dS}. Analogously, a $3$-manifold is recovered from its link diagram up to diffeomorphism by surgery on the link \cite{Rf,T}.
\begin{figure}
\centering
\includegraphics[width=3truein]{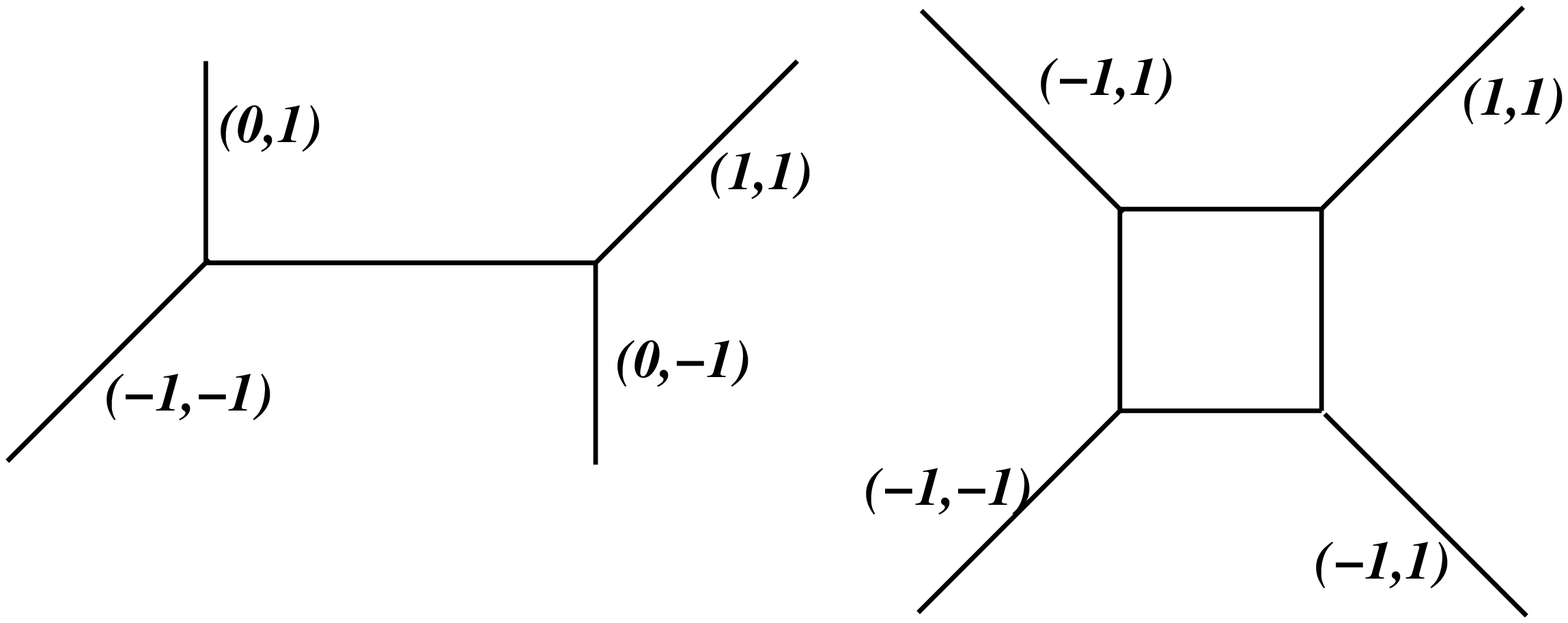} \caption{Toric webs of $\calO(-1)\oplus\calO(-1)$ and local $\CP^1\times\CP^1$}\label{torwebs}
\end{figure}
\begin{figure}
\centering
\includegraphics[width=3truein]{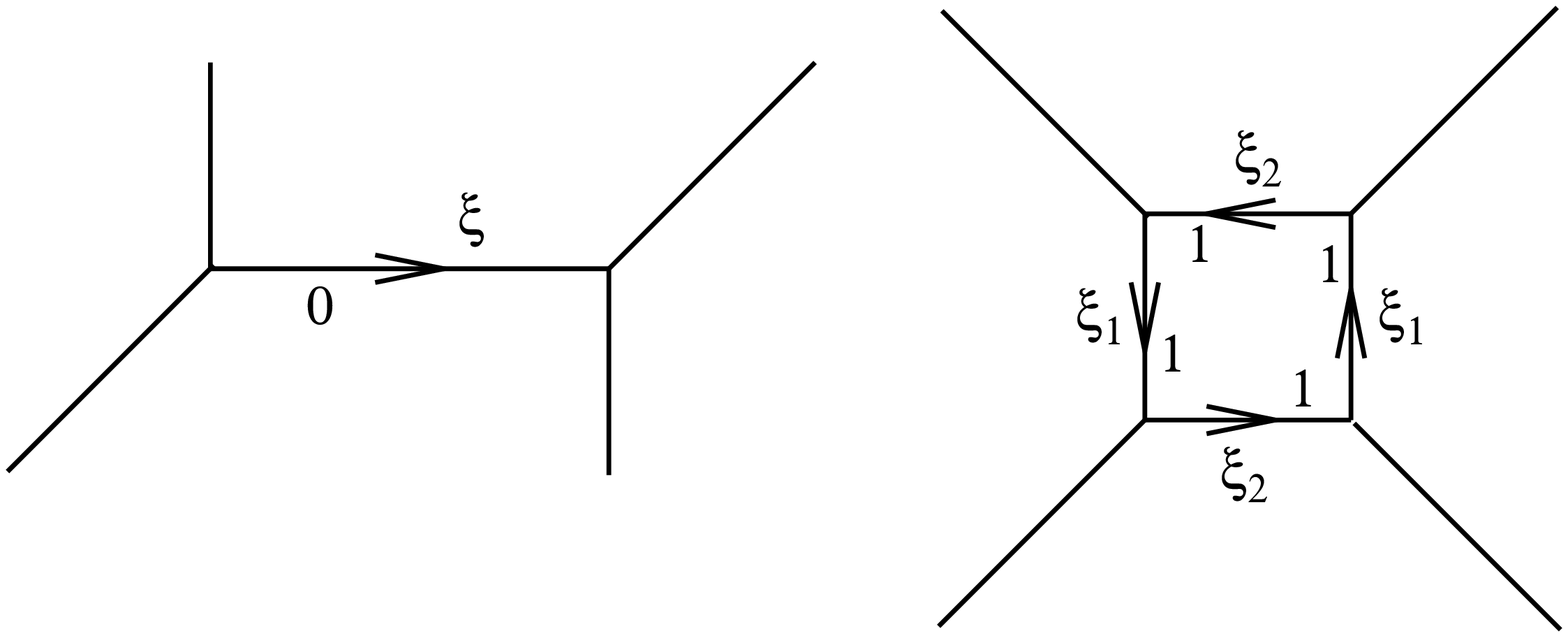} \caption{Toric graphs of $\calO(-1)\oplus\calO(-1)$ and local $\CP^1\times\CP^1$}\label{torgraphs}
\end{figure}
Having toric webs rigidly embedded in $\R^2$ is inconvenient; one would prefer to treat them as abstract graphs, perhaps with additional data. This is possible at least as far as the topological vertex is concerned although the resulting graphs may no longer be complete invariants. 

Tracing back the construction of a threefold from its web one concludes that the vertices correspond to fixed points of the torus action and compact edges correspond to fixed rational curves (copies of $\CP^1$). Being rational curves sitting inside a Calabi-Yau threefold their normal bundles are isomorphic to $\calO(n-1)\oplus\calO(-n-1),\ n=\pm1,\pm2,\dots$. The {\it framing number} $n_e$ for each edge $e$ is assigned the value from the normal bundle type of the corresponding curve. This only determines $n_e$ up to sign and the edge must be oriented to specify it. Although on their own these orientations are chosen arbitrarily they must be aligned with the framing numbers, the exact rule is given in \cite[4.2]{L3Z}. 

If $\xi_1,\dots,\xi_k$ is an integral basis in $H_2(X,\Z)$ as in \refS{S2} then each edge curve $C_e$ represents a homology class expressible as a linear combination $[C_e]=m_1\xi_1+\dots+m_k\xi_k$, $m_i\in\Z$. One requires these {\it homology relations} to be attached to the edges as well. The result is a graph called the {\it toric graph}. Toric graphs for $\calO(-1)\oplus\calO(-1)$ and the local  $\CP^1\times\CP^1$ are shown on \refF{torgraphs}. Framing numbers and homology relations are the only data aside from the topology of the web used in the topological vertex. We emphasize that both can be recovered algorithmically from the web itself without any recourse to the original threefold \cite{IqK},\,\cite[4.1]{L3Z}.

\smallskip

\noindent {\bf Partitions and Schur functions.}\quad 
We wish to briefly describe the topological vertex algorithm to see how $q$-bifactorials naturally emerge from it. This requires some basic information about partitions \cite{Mc} that appear in the Reshetikhin-Turaev calculus as well. Partitons serve as labels in state sums defining the invariants. A {\it partition} $\lambda$ is an element of $\Z_+^\infty$ with only finitely many non-zero entries that are nonincreasing, i.e.
$$
\lambda=(\lambda_1,\lambda_2,\dots,\lambda_N,0,\dots),\quad\lambda_i\in\Z_+,
\quad\lambda_1\geq\lambda_2\geq\dots\geq\lambda_N\geq0.
$$
Let $\P$ denote the set of all partitions. The number of non-zero entries $l(\lambda)$ is called the {\it length} of a partition and the sum of all entries $|\lambda|:=\lambda_1+\lambda_2+\dots+\lambda_N$ is called its {\it size} (or weight). Partitions are visualized by {\it Young diagrams}, rows of boxes stacked top down with $\lambda_i$ boxes in $i$-th row, \refF{Young}. The {\it conjugate partition} $\lambda'$ is obtained visually by transposing the Young diagram along the main diagonal and analytically as $\lambda_i':=\max\{j|\,\lambda_j\geq i\}$. Note that $\lambda''=\lambda$ and $\lambda'_1=l(\lambda)$, $|\lambda'|=|\lambda|$. Another relevant characteristic of a partition, sometimes called its {\it quadratic Casimir}, is
\be\label{kap} 
\k(\lambda):=\sum_{i=1}^\infty\lambda_i(\lambda_i-2i+1),\quad\k(\lambda')=-\k(\lambda),\quad\k(\lambda)\in2\Z.
\ee
Partitions represent possible states of compact edges in a toric graph and a combination of partition labels for each edge represents a state of the graph \cite{Kh}. The partition function is then obtained by summing over all possible states. 
\begin{figure}
\centering
\includegraphics[width=3truein]{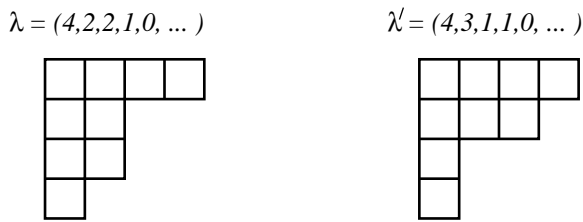} \caption{Young diagram and its conjugate}\label{Young}
\end{figure}

Amplitudes (see \refD{Amp}) of a labeled graph are defined via a specialization of {\it Schur functions} $s_\lambda$ indexed by partitions. They are symmetric 'functions' in the sense of Macdonald \cite{Mc}, i.e. formal infinite sums of monomials in countably many variables that become symmetric polynomials if all but finitely many variables are set equal to zero (more technically, if monomials containing any variable outside of a finite set are discarded from the sum). For instance, if $\lambda=(1^n):=(\underbrace{1,1,\dots,1}_{n\text{ times}},0,\dots)$ then $s_{(1^n)}$ is the $n$-th elementary symmetric function:
\be\label{elsym} 
s_{(1^n)}(x)=e_n(x):=\!\!\!\!\sum_{1\leq  i_1<\dots<i_n<\infty}\!\!\!\!x_{i_1}\cdots x_{i_n}.
\ee
In general, $s_\lambda$ are polynomials in the elementary symmetric functions given by the {\it Jacobi-Trudy formula} $s_\lambda=\det(e_{\lambda_i'-i+j}),\ 1\leq  i,j\leq\,l(\lambda')=\lambda_1$. For example,
\bee
s_{(2,1,0,\dots)}(x)=\begin{vmatrix}e_2&e_0\\e_3&e_1\end{vmatrix}=e_1e_2-e_0e_3
=\sum_{i=1}^\infty x_i\cdot\sum_{i<j=1}^\infty x_ix_j-\sum_{i<j<k=1}^\infty x_ix_jx_k.
\eee
Since $e_n$ are homogeneous of degree $n$ the Jacobi-Trudy formula implies that $s_\lambda$ are also homogeneous of degree $|\lambda|$, i.e. $s_\lambda(ax)=a^{|\lambda|}s_\lambda(x)$. Moreover, $s_\lambda,\ \lambda\in\P$ form a linear basis in the space of symmetric functions, in particular $s_\lambda s_\mu=\sum_{\nu\in\P}c^\lambda_{\mu\nu}s_\nu$. It turns out that $c^\lambda_{\mu\nu}$ are non-negative integers that vanish unless $|\nu|=|\lambda|+|\mu|$. They are the famous {\it Littlewood-Richardson coefficients} \cite{Mc}.

Specializations of Schur functions appearing in the topological vertex are obtained by specializing the formal variables $x_i$ to elements of a geometric series possibly modified at finitely many entries. Such specializations were extensively studied by Zhou \cite{Zhc}. Define the {\it Weyl vector} $\rho$ by
$$
\rho:=(-\frac12,-\frac32,\dots)=\left(-i+\frac12\right)_{i=1}^\infty.
$$
Note that $\rho$ is not a partition. Introduce a new formal variable $q$ and for any vector $\xi$ set $q^\xi:=(q^{\xi_1},q^{\xi_2},\dots)$ so in particular $q^\rho=(q^{-\frac12},q^{-\frac32},\dots)$ is a geometric series.
\begin{definition}
One-, two- and three-point functions of the topological vertex are respectively \cite{L3Z,Zhc}
\begin{align}\label{curW}
\calW_{\lambda}(q)\,\ \ &:=s_\lambda(q^\rho)=\calW_{\lambda0}(q)\notag\\
\calW_{\lambda\mu}(q)\!\,\  &:=s_\lambda(q^\rho)s_\mu(q^{\lambda+\rho})\\
\calW_{\lambda\mu\nu}(q) &:=q^{\frac{\k(\mu)+\k(\nu)}2}
\!\!\!\!\sum_{\alpha,\beta,\gamma\,\in\,\P}\!\!\!c^\lambda_{\alpha\gamma}c^{\nu'}_{\gamma\beta}   
\ \frac{\calW_{\mu'\alpha}(q)\calW_{\mu\beta'}(q)}{\calW_{\mu0}(q)}\notag
\end{align}
\end{definition}
There is a shorter expression for the three-point function via the skew Schur functions \cite{Kh,Zhc} but we do not need it here and \eqref{curW} is somewhat reminiscent of the Verlinde formula in Chern-Simons theory \cite{Wj}. We assume $q\in\C\,\backslash\,\R_-$ and $q^{\frac12}$ is then defined by the principal branch of the square root. One can see by inspection from \eqref{elsym} that $e_n(q^{\lambda+\rho})$ converges for $|q|>1$. Since the Schur 'functions' $s_\mu$ are polynomials in $e_n$ they are also well-defined as honest functions of $q$ upon specializing to $q^{\lambda+\rho}$.

To be consistent with the usual basic hypergeometric notation \cite{GaR} we wish to switch from $|q|>1$ to $|q|<1$. This can be done using a symmetry of the two-point functions \cite{Zhc}
\be\label{qinv} 
\calW_{\lambda\mu}(q)=(-1)^{|\lambda|+|\mu|}\calW_{\lambda'\mu'}(q^{-1})
=(-1)^{|\lambda|+|\mu|}s_{\lambda'}(q^{-\rho})s_{\mu'}(q^{-\lambda'-\rho}).
\ee
This identity is a curious one since the two sides never converge simultaneously (both diverge for $|q|=1$). It has the same meaning as a more familiar identity
$$
\sum_{i=1}^\infty q^i=\frac{q}{1-q}=-\frac{1}{1-q^{-1}}=-\sum_{i=0}^\infty q^{-i}=-q\sum_{i=1}^\infty q^{-i},
$$
where the two sides never converge simultaneously either. In fact, $\calW_{\lambda\mu}(q)$ are rational functions of $q^{\frac12}$ and can be analytically continued to $\C\,\backslash\,\R_-$, \eqref{qinv} expresses this analytic continuation.

The appearence of $q$-bifactorials in partition functions is due to the {\it Cauchy identity} for Schur functions \cite{Mc,Zhc}
\be\label{Cauchy}
\sum_{\lambda\in\P}s_{\lambda}(x)\,s_{\lambda'}(y)\,u^{|\lambda|}=\prod_{i,j=1}^\infty(1+u\,x_iy_j).
\ee
If $x_i=q^{i-1}$, $y_j=q^{j-1}$ the righthand side of \eqref{Cauchy} becomes
\be\label{bifac}
\prod_{i,j=1}^\infty(1+u\,q^{i-1}q^{j-1})=\prod_{i,j=0}^\infty(1+u\,q^{i+j})=(u;q)_\infty^{(2)}.
\ee
Note that although \eqref{Cauchy} is a formal identity if both sides converge as in \eqref{bifac} it holds as a function identity.

\smallskip

\noindent {\bf Partition functions as state sums.}\quad 
Let us now inspect the state sums appearing in the topologial vertex. Let $V$ and $E_c$ denote the sets of vertices and compact edges of a toric graph respectively. Choose an arbitrary orientation for each element of $E_c$, this determines the sign of the framing numbers. Assign a formal variable $a_i$ to each element of a basis $\xi_i\in H_2(X,\Z)$ and set $a_e:=a_1^{m_1}\cdots a_k^{m_k}$ for the corresponding edge curve $[C_e]=m_1\xi_1+\dots+m_k\xi_k$. Finally label all compact edges by arbitrarily chosen partitions $\lambda_e\in\P$ and non-compact ones by the trivial partition $0\in\P$. Triples of partitions $\vec{\lambda}_v:=(\lambda_1,\lambda_2,\lambda_3)$ are then assigned to each vertex according to the following rule
\begin{quote} Starting with any of the three edges and going counterclockwise around the vertex pick the edge's label if the arrow on the edge is outgoing and its conjugate if the arrow is incoming, \refF{vertpart}.
\end{quote}
\begin{figure}
\centering
\includegraphics[width=1truein]{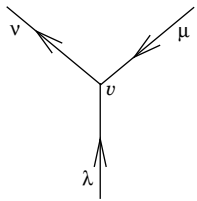} \caption{Partition triple at a vertex $\vec{\lambda}_v:=(\lambda',\mu',\nu)$}\label{vertpart}
\end{figure}
Non-compact edges present no problem since $0'=0$. This determines the triple up to cyclic permutation which is enough since $\calW_{\vec{\lambda}_v}:=\calW_{\lambda_1\lambda_2\lambda_3}$ has cyclic symmetry.
\begin{definition}\label{Amp} {\sl Amplitude} of a labeled toric graph relative to a basis $\xi_1,\dots,\xi_k\in H_2(X,\Z)$ is given by \cite{Kh,L3Z}
\be\label{amp}
A_{\{\lambda_e\}}(a_1,\dots,a_k;q):=\prod_{e\in E_c}(-1)^{|\lambda_e|(n_e+1)}\,
q^{\frac{n_e\k(\lambda_e)}2}\,a_e^{|\lambda_e|}\cdot\prod_{v\in V}\calW_{\vec{\lambda}_v}(q).
\ee
\end{definition}
The main result of \cite{L3Z} can be stated as follows
\begin{theorem}\label{Ztv} The reduced Gromov-Witten partition function of a toric Calabi-Yau threefold $X$ relative to a basis $\xi_1,\dots,\xi_k\in H_2(X,\Z)$ is given by a state sum
\be\label{ztv}
Z_X'(a_1,\dots,a_k;q)=\sum_{\{\lambda_e\}_{\lambda_e\in\P}}A_{\{\lambda_e\}}(a_1,\dots,a_k;q)
=\sum_{\lambda_1,\dots,\lambda_{|E_c|}\in\P}A_{\lambda_1,\dots,\lambda_{|E_c|}}(a_1,\dots,a_k;q)
\ee
assuming in the second sum that the edges are numbered and $\lambda_i:=\lambda_{e_i}$.
\end{theorem}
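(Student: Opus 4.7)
The strategy is equivariant localization. Since $X$ is a toric Calabi-Yau threefold, the $(\C^*)^3$-action preserving the holomorphic volume form restricts to a subtorus $T\simeq(\C^*)^2$ whose fixed set in $X$ is precisely the vertices $V$ and compact edge curves $\{C_e\}_{e\in E_c}$ of the toric graph. This $T$-action lifts to the moduli $\Mbar_{g,n}(X,\alpha)$, and by the virtual localization formula of Graber-Pandharipande each invariant $\langle 1\rangle_{g,\vec{d}}$ reduces to a sum of integrals over connected components of the $T$-fixed locus, weighted by the inverse equivariant Euler class of the virtual normal bundle.

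A $T$-fixed stable map must contract its non-rigid irreducible components to the vertices and cover each compact edge curve $C_e$ with some multiplicity. The combinatorial data of such a fixed locus is a decorated toric graph in which each vertex $v$ carries a genus and a contracted piece of the domain, each edge $e$ carries a partition $\lambda_e\in\P$ recording how the cover of $C_e$ splits into ramified connected components, and unbounded edges are labeled by the empty partition. My first step is to regroup the resulting localization sum by the edge-labelings $\{\lambda_e\}$ rather than by genera, so that genus information is absorbed into $q$ through the identification $q=e^{ix}$.

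The contribution of a decorated graph factors over vertices and edges. The edge contribution is an integral over a moduli of multiple covers of $\CP^1$ with normal bundle $\calO(n_e-1)\oplus\calO(-n_e-1)$; a direct residue computation yields exactly $(-1)^{|\lambda_e|(n_e+1)}\,q^{n_e\k(\lambda_e)/2}\,a_e^{|\lambda_e|}$. Here $a_e^{|\lambda_e|}$ tracks the homology class $[C_e]=m_1\xi_1+\cdots+m_k\xi_k$ weighted by the total cover degree $|\lambda_e|$, while the sign and the $\k(\lambda_e)$-framing phase come from the equivariant weights of the normal bundle and the ramification profile. The vertex contribution at $v$ is a three-partition generating series of Hodge integrals on $\Mbar_{g,n}$, the \emph{formal topological vertex} $V_{\vec{\lambda}_v}(q)$, whose three partitions are determined by the rule of \refF{vertpart}.

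The main obstacle is the identification $V_{\vec{\lambda}_v}(q)=\calW_{\vec{\lambda}_v}(q)$, that is, the three-partition Mari\~no-Vafa formula. My plan is to first dispatch the one- and two-partition cases, where ELSV-type formulas express the relevant Hodge integrals as explicit sums over partitions weighted by characters of symmetric groups. For the full three-partition vertex I would then cut $V_{\lambda\mu\nu}$ along one of the edges incident to $v$, within the formal relative Gromov-Witten theory of \cite{L3Z}; the resulting sum over an intermediate partition is then recognized as the skew-Schur sum defining $\calW_{\lambda\mu\nu}(q)$ via the Cauchy identity \eqref{Cauchy} and the Jacobi-Trudy expansion. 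With this identification in place, substituting the vertex and edge factorizations back into the localization formula and summing over all edge labelings $\{\lambda_e\}_{e\in E_c}$ reproduces the state-sum \eqref{ztv}.
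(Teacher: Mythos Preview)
The paper does not prove this theorem: it is stated as ``the main result of \cite{L3Z}'' and cited without argument, so there is no proof in the paper to compare against. Your sketch is broadly consistent with the strategy actually carried out in \cite{L3Z}: virtual localization on $\Mbar_{g,n}(X,\alpha)$ with respect to the two-torus, identification of $T$-fixed loci with edge-labeled decorated graphs, factorization into edge and vertex contributions, and recognition of the vertex contribution as $\calW_{\vec{\lambda}_v}$.

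That said, your account of the hardest step is too breezy. The identification of the formal Hodge-integral vertex $V_{\lambda\mu\nu}$ with $\calW_{\lambda\mu\nu}$ is not obtained in \cite{L3Z} by ``cutting along one edge'' and invoking the Cauchy identity; it is proved by showing that both sides satisfy the same cut-and-join type differential equation with the same initial data, and this occupies a substantial portion of that paper. The one- and two-partition cases you propose to ``dispatch'' are themselves the Mari\~no--Vafa formula and its two-partition extension, each a nontrivial theorem. Moreover, the localization in \cite{L3Z} is performed in \emph{formal relative} Gromov--Witten theory, not the absolute theory you invoke in your first paragraph; the passage to relative invariants is what makes the gluing along internal edges rigorous, and your sketch elides this distinction. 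So the architecture of your proposal is right, but the vertex-identification step is where essentially all the work lies, and it does not reduce to Schur-function identities in the way you suggest.
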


\smallskip

\noindent {\bf Partition function of the resolved conifold.}\quad 
Here $H_2(X,\Z)$ is one-dimensional and $\xi=[\CP^1]$. There is only one $a$ variable and only one edge. The amplitude $A_\lambda$ for $\lambda\in\P$ is (see \refF{torgraphs} and \eqref{qinv})
\begin{multline*}
A_{\lambda}(a;q):=(-1)^{|\lambda|}\,a^{|\lambda|}\cdot\calW_{\lambda00}(q)\,\calW_{\lambda'00}(q)
=(-a)^{|\lambda|}\calW_{\lambda}(q)\,\calW_{\lambda'}(q)\\
=(-a)^{|\lambda|}(-1)^{|\lambda|+|\lambda'|}s_{\lambda'}(q^{-\rho})s_{\lambda}(q^{-\rho})
\end{multline*}
Recalling that $|\lambda'|=|\lambda|$, $-\rho=\left(i-\frac12\right)_{i=1}^\infty$ and $s_{\lambda}$ is homogeneous of degree 
$|\lambda|$ we compute further
\bee
=(-a)^{|\lambda|}s_{\lambda'}(q^{i-\frac12})s_{\lambda}(q^{j-\frac12})
=(-a)^{|\lambda|}q^{\frac{|\lambda|+|\lambda'|}2}s_{\lambda'}((q^{i}))s_{\lambda}((q^{j}))
=(-aq^{-1})^{|\lambda|}s_{\lambda'}((q^{i}))s_{\lambda}((q^{j})).
\eee
Suppose that $a$ is small enough for $\sum_{\lambda\in\P}A_{\lambda}(a;q)$ to converge then we get by \refT{Ztv} and the Cauchy identity \eqref{Cauchy}
\begin{align}
Z_X'(a;q) &=\sum_{\lambda\in\P}\calW_{\lambda}(q)\,\calW_{\lambda'}(q)\,(-a)^{|\lambda|}
=\sum_{\lambda\in\P}s_{\lambda'}((q^{i}))s_{\lambda}((q^{j}))(-aq^{-1})^{|\lambda|}\label{z'curW}\\
&=\prod_{i,j=1}^\infty(1+(-aq^{-1})\,q^{i}q^{j})
=\prod_{i,j=0}^\infty(1-aq\,q^{i+j})=(aq;q)_\infty^{(2)}\label{z'rescon}
\end{align}
If we {\it accept} the MacMahon function as the degree zero partition function of the resolved conifold (despite the issues discussed after \refC{MacAsIm}) then
$$
Z_X^0(q)=\M(q)^{\frac{\chi(X)}2}=\frac1{\prod_{n=1}^\infty(1-q^n)^{n}}=\frac1{(q;q)_\infty^{(2)}}.
$$
We conclude that the (full) Gromov-Witten partition function of the resolved conifold is
\be\label{zrescon}
Z_X(a;q)=Z_X^0(q)\,Z_X'(a;q)=\frac{(aq;q)_\infty^{(2)}}{(q;q)_\infty^{(2)}}
\ee
as used in the Introduction.

\section{Reshetikhin-Turaev calculus and\\ partition function of the $3$-sphere}\label{S5}

As explained in the beginning of the previous section this one is complementary to it. We briefly review the $\Sl_N\C$ Reshetikhin-Turaev calculus \cite{AK,T} in a form that invites analogies with the topological vertex. In particular, we forgo the usual terminology of dominant weights and irreducible representations of $\Sl_N\C$ and rephrase everything directly in terms of partitions. The immediate goal is to compute the partition function of $S^3$ in a suitable form and compare it to the one for the resolved conifold (\refT{Comparison}).

Whereas computation of Gromov-Witten invariants in all degrees and genera is an open problem (beyond the cases of toric Calabi-Yau threefolds \cite{L3Z} and local curves \cite{BP}), Reshetikhin-Turaev calculus provides an algorithm for computing Chern-Simons invariants for arbitrary closed $3$-manifolds, especially effective for Seifert-fibered ones \cite{Mar1}. This circumstance combined with explicit large $N$ dualities for toric Calabi-Yau threefolds is the secret behind physical derivation of the topological vertex. To be sure, there is a catch. The Reshetikhin-Turaev model (or equivalently Atiyah-Turaev-Witten TQFT \cite{At,T}) is not a single model but a countable collection of them, one for each pair of positive integers $k,N$ known as {\it level} and {\it rank}. This would not be much of a hindrance if not for the tenuous connection between invariants for different $k$ and $N$. As a rule, geometers are interested in asymptotic behavior for large $k$ \cite{Roz}, and physicists in both large $k$ and large $N$ behavior. Reshetikhin-Turaev sums with ranges depending explicitly on $k$ and $N$ are not exactly custom-made for those types of questions. In fact, they require significant work even in simplest cases to be converted into asymptotic-friendly form. No general method exists; most common ad hoc procedures use Poisson resummation \cite{Mar1} or finite group characters \cite{AK,Ki2}.

The idea of the Reshetikhin-Turaev construction (related but different from Witten's original one \cite{Wj} as formalized by Atiyah \cite{At}) is to combine some deep topological results of Likorish-Wallace and Kirby with the representation theory of quantum groups \cite{AK,T}. A theorem of Likorish and Wallace asserts that any closed $3$-manifold can be obtained by  surgery on a framed link in $S^3$ \cite{Rf}. This is complemented by Kirby's characterization \cite{Kb} of links that produce diffeomorphic manifolds as those related by a sequence of Kirby moves: blow up/down and handle-slide. Blow up/down adds/removes an unknotted unlinked component with a single twist and handle-slide pulls any component over any other one, \refF{Kirby}. Thus, if one can find an invariant of framed links that remains unchanged under Kirby moves it automatically becomes an invariant of closed $3$-manifolds via surgery.
\begin{figure}
\centering
\includegraphics[width=4truein]{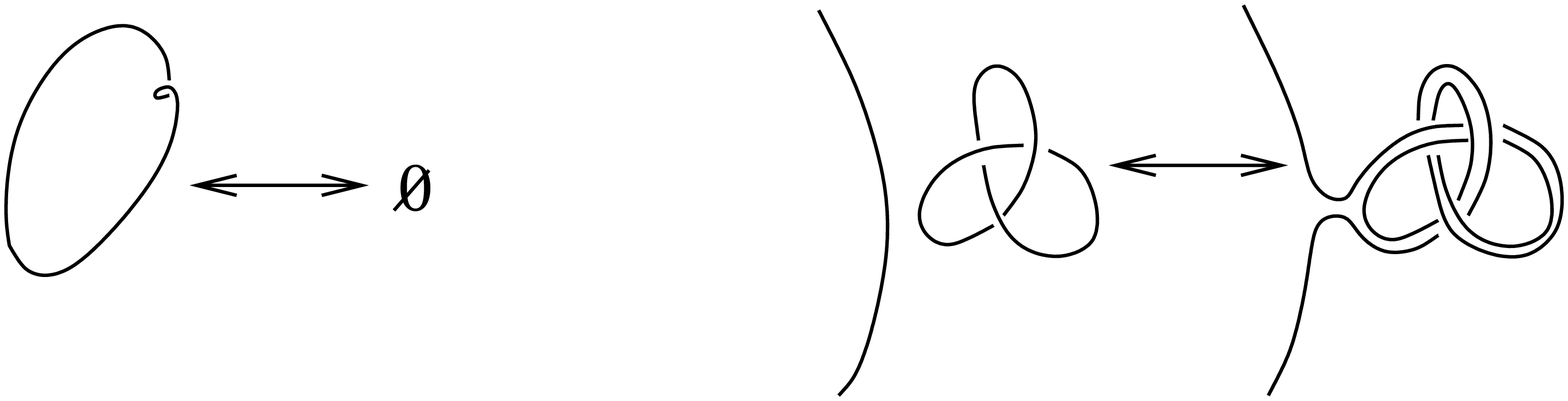} \caption{Blow up/down and handle slide over a trefoil knot}\label{Kirby}
\end{figure}

\smallskip

\noindent {\bf Hopf and twist matrices.}\quad
Framed links can be represented up to isotopy by plane diagrams with under- and over-crossings and twists as in \refF{Kirby} providing a combinatorial model of $3$-manifolds. Slicing a link diagram bottom to top and avoiding slicing through cups, caps, twists or crossings one gets arrays of basic elements \refF{linkelem} stacked on top of each other. 

This decomposition fits nicely with structure of a linear representation category: placing elements next to each other corresponds to tensoring and stacking corresponds to composition. It remains to find an object with representation category meeting all the invariance requirements. It turns out that it is extremely hard to find one producing {\it non-trivial} invariants. Classical Lie groups and algebras do not work unfortunately. One has to deform the universal enveloping algebras of say $\Sl_N\C$ into quantum groups and then specialize the deformation parameter $q$ to a root of unity $q=e^{2\pi i/(k+N)}$. As if that were not enough the tensor product of representations has to be modified as well. The end result \cite{AK,Ki2,T} is a representation-like category with only a {\it finite number of irreducible representations}. For $\Sl_N\C$ at level $k$ they are indexed by partitions with Young diagrams in the {\it $(N-1)\times k$ rectangle}, i.e.
$$
\P_{N-1}^k:=\{\lambda\in\P\left|\ l(\lambda)\leq N-1,\,l(\lambda')=\lambda_1\leq k\right.\}.
$$
In the equivalent language of dominant weights this corresponds to the weights in the Weyl alcove of the Cartan-Stiefel diagram of $\Sl_N\C$ scaled by $k$, see \cite{AK}. The Reshetikhin-Turaev invariants are computed as state sums over labelings of a link diagram with each link component labeled by a partition from $\P_{N-1}^k$, a finite set.
\begin{figure}
\centering
\includegraphics[width=3truein]{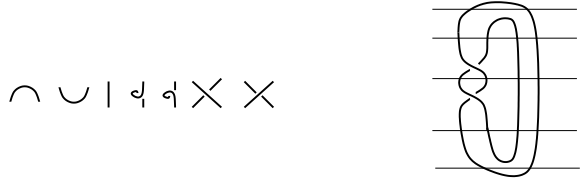} \caption{Basic elements and slicing of a Hopf link}\label{linkelem}
\end{figure}

Thus, unlike in the topological vertex where sums are taken over all partitions and are infinite, in the Reshetikhin-Turaev calculus sums are finite with explicit dependence on $k,N$. Once a diagram is labeled morphisms between irreducible representations and their tensor products are assigned to the elements from \refF{linkelem}, and then assembled by tensoring, composing and eventually taking traces (corresponding to caps) to obtain numerical invariants. The hardest ones to compute are the crossing morphisms for they depend on the so-called $R$-matrix of a quantum group \cite{AK,T}. Good news is that for a large class of $3$-manifolds, the Seifert-fibered ones and others, the use of crossing morphisms can be avoided entirely in computing the invariants \cite{Mar1,Wj} (but not in proving their invariance). In terms of conformal field theory they are completely determined by fusion rules without involving the braiding matrices \cite{Ki2}. This means that the only algebraic inputs are the {\it Hopf and twist matrices} $S$ and $T$:
\be\label{SnT}
S_{\lambda\mu}=S_{00}\,W_{\lambda\mu};\quad T_{\lambda\mu}=T_{00}\,q^{\frac12C_2(\lambda^N)}\delta_{\lambda\mu^*}\,.
\ee
The notation is as follows:
\begin{itemize}
\item[] $W_{\lambda\mu}$ is the normalized quantum invariant of the Hopf link \refF{linkelem} with components labeled by partitions $\lambda,\mu$ (see more below);
\item[] $\mu^*$ is the partition $\Sl_N$-dual to $\mu$, $\mu^*_i:=\mu_1-\mu_{N-i+1}$ for $1\leq i\leq N-1$ and $\mu^*_i:=0$\\ for $i\geq N$ (not to be confused with the conjugate partition $\mu'$);
\item[] $\lambda^N$ denotes the $\gl_N$ coordinates of $(N-1)\times k$ partition $\lambda$, $\lambda^N_i:=\lambda^N_i-|\lambda|/N$, in particular $\rho^N_i:=\frac12(N-2i+1)$;
\item[] $C_2(\lambda^N):=\lambda^N\cdot(\lambda^N+2\rho^N)$ is the quadratic Casimir of $\lambda$ as a dominant weight closely related to the quadratic Casimir $\k(\lambda)$ of a partition $\lambda$:\\ 
$\left. \qquad\qquad\qquad C_2(\lambda^N)=\k(\lambda)+N|\lambda|-|\lambda|^2/N\right.$.
\end{itemize}
We shall say more about the normalization constants $S_{00},T_{00}$ below.

Formulas for $W_{\lambda\mu}$ were originally obtained by Kac and Peterson in 1984 in the context of affine Lie algebras. Their relevance to the Chern-Simons theory was discovered by Witten \cite{Wj}. In 2001 Lukac \cite{Lc} realized that they are specializations of Schur functions of $N$ variables, namely
\be\label{WN}
W_{\lambda\mu}(N;q):=s_\lambda(q^{\rho^N})s_\mu(q^{\lambda^N+\rho^N})\ \ \text{with } q=e^{\frac{2\pi i}{k+N}}.
\ee
This should be compared to the two-point functions \eqref{curW} of the topological vertex. This realization led among other things to the physical derivation of the topological vertex, where $\calW_{\lambda\mu}(q)$ are obtained as some loosely interpreted limits of $W_{\lambda\mu}(N;q)$ \cite{AKMVv,IqK,Mar2}. Let us emphasize the differences though: in \eqref{curW} $q$ is a formal variable whereas in \eqref{WN} it is a number. Moreover, the number of variables in $s_\lambda,s_\mu$ before specialization is infinite whereas here it is $N<\infty$. This last circumstance dramatically simplifies many computations with $\calW_{\lambda\mu}$ compared to their analogs with $W_{\lambda\mu}$ because infinite specializations often have nice analytic expressions \cite{Mc}.

The twist matrix $T$ also has a vertex counterpart in the form of the framing numbers $n_e$ that contribute factors of $q^{n_e\k(\lambda_e)/2}$ to the amplitude \eqref{amp}. Incidentally, this explains their name. In the Reshetikhin-Turaev sums $T_{\lambda\mu}$ factors account for twists in link diagrams that in their turn represent {\it framing} of a link, i.e. trivialization of its normal bundle up to homotopy. If one thinks of strands as thin ribbons the signed number of twists gives exactly the signed number of full twists in a ribbon.

The two-point functions are symmetric $W_{\lambda\mu}=W_{\mu\lambda}$ and the one-point functions $W_{\lambda}:=W_{\lambda0}=W_{0\lambda}$ are called the {\it quantum dimensions} of representations indexed by $\lambda$. The {\it quantum diameter} is 
\be\label{qdiam}
\calD^2:=\sum_{\lambda\in\P_{N-1}^k}W_{\lambda}(N;q)^2
\ee
and $S_{00}$ is simply its inverse $S_{00}:=\calD^{-1}$. Analogously, $T_{00}$ is the inverse of the {\it charge factor} $\zeta:=e^{2\pi ic/24}$, where $c:=k\dim_\C(\Sl_N\C)/(k+N)$ is the so-called {\it central charge} from conformal field theory \cite{Ki2}. Thus explicitly, $T_{00}:=\zeta^{-1}=q^{-k(N^2-1)/24}$. These normalizations are needed to make the $S$ and $T$ matrices satisfy the defining relations of $SL_2\Z$
$$
(ST)^3=S^2,\quad S^2T=TS^2,\quad S^4=I.
$$
Note that unlike $W_{\lambda\mu}$ and $q^{\frac12C_2(\lambda^N)}$ that depend on $k$ only via $q=e^{2\pi i/(k+N)}$ this is no longer the case for the normalizing constants $S_{00},T_{00}$ and this causes major problems in relating Chern-Simons expressions to Gromov-Witten ones \cite{AK}.

\smallskip

\noindent {\bf Reshetikhin-Turaev invariants.}\quad
Let $J_{\lambda_1,\dots,\lambda_{n}}(L)$ denote the {\it colored HOMFLY polynomial} of an $n$-component link $L$, i.e. the amplitude of the link diagram computed as outlined above after labeling the link components by partitions ({\it colors})
$\lambda_1,\dots,\lambda_{n}$\,, see \cite{AK,T} for specifics. Then the {\it Reshetikhin-Turaev invariant} of the $3$-manifold $M$ surgered on $L$ from $S^3$ is 
\be\label{RTinv}
\tau(M):=\zeta^{-3\sigma}\,\,\calD^{-n-1}\!\!\!\!\!\!\!\!\!\sum_{\lambda_1,\dots,\lambda_{n}\in\P_{N-1}^k}
\!\!\!\!\!\!\!J_{\lambda_1,\dots,\lambda_{n}}(L)\,W_{\lambda_1}\cdots W_{\lambda_{n}}
\ee
where $\sigma$ is the signature of the linking matrix of $L$ \cite{Rf}. Since $S^3$ can be obtained from itself by surgery on the empty link $\emptyset$ with $J(\emptyset)=1$, $n=0$ components and $0$ linking matrix we have
\be\label{RTS3}
Z_{S^3}:=\tau(S^3)=\calD^{-1}=S_{00}
=\left(\sum_{\lambda\in\P_{N-1}^k}W_{\lambda}(N;q)^2\right)^{-\frac12}
\ee
This is the {\it Chern-Simons partition function of the $3$-sphere} to be identified after Witten \cite{Wc} and Gopakumar-Vafa \cite{GV} with the string partition function of $T^*S^3$. Note that \eqref{qdiam} bears some resemblance to the expression \eqref{z'curW} for the reduced partition function of the resolved conifold especially if we interpret $(-a)^{|\lambda|}$ as a convergence factor needed to extend the sum from $\P_{N-1}^k$ to all partitions. However, unlike  \eqref{z'curW} that gives $Z_{X}'$ directly  formula \eqref{qdiam} gives $Z_{S^3}^{-2}$, so naively one does not expect  these partition functions to be nearly equal. 

The normalization adopted in \eqref{RTinv} is the one that makes $\tau(M)$ into an honest invariant. It is due to Reshetikhin and Turaev and is known to differ from the physical normalization of Witten \cite{Wj} coming from the path integral. Although the physical normalization gives better agreement with Gromov-Witten theory \cite{OV2} it has not been consistently defined in general. In the known examples it is discerned by comparing $\tau(M)$ to heuristic path integral expansions \cite{Roz}.

\smallskip

\noindent {\bf Partition function of $S^3$.}\quad Although we used $q$ above as much as possible one should not forget that in the context of Chern-Simons theory this is simply a shorthand for $e^{2\pi i/(k+N)}$ and all the relevant quantities are only defined for positive integers $k,N$. Large $N$ duality predicts among other things that it should be possible to interpret $\tau(M)$ as restrictions of holomorphic in $q$ functions to these special values, perhaps up to some {\it parasite factors} steming form misnormalizations. It is these holomorphic functions that should correspond to the Gromov-Witten generating functions in $q$. According to this philosophy we should try to transform the righthand side of \eqref{qdiam} into an explicit function of $q$ and $N$ so far as possible. A clean way of doing this is not known to the author, the problem being the rogue range of summation $\P_{N-1}^k$. The known ways via Poisson resummation \cite[4.2]{Mar1} or group characters \cite[Theorem 10]{AK} are rather down and dirty and we do not reproduce them here. An intermediate answer is 
\be\label{intermid}
\calD^2=(-1)^{\frac{N(N-1)}2}\ N(k+N)^{N-1}\!\!\!\prod_{1\leq i<j\leq N} 
\!\!\left(q^{\frac12(j-i)}-q^{-\frac12(j-i)}\right)^{-2}
\ee
and it fulfils our wish only partially. One could replace $k+N$ by $\ln q/2\pi i$ but this does not lead to anything useful. 

In fact, $N(k+N)^{N-1}$ is the volume of the fundamental parallelepiped of a rescaled root lattice $\Lambda_r$ of $\Sl_N\C$, namely of $(k+N)^{1/2}\Lambda_r$. Ooguri and Vafa \cite[2.1]{OV2} replace it with the volume of $U_N$ in their normalization.
Thus, we ignore $N(k+N)^{N-1}$ as a parasite factor and focus on the product following it instead. Since
$\sum_{1\leq i<j\leq N}\!1=N(N-1)/2$ and $\sum_{1\leq i<j\leq N}(j-i)=N(N^2-1)/12$ we have
$$
\prod_{1\leq i<j\leq N}\!\!\left(q^{\frac12(j-i)}-q^{-\frac12(j-i)}\right)
=q^{-\frac{2N(N^2-1)}{24}}\prod_{1\leq i<j\leq N}\!\!\left(1-q^{j-i}\right).
$$
Note that $q^{-\frac{2N(N^2-1)}{24}}=\zeta^{-2N/k}$ and a power of $\zeta$ appears explicitly in the Reshetikhin-Turaev normalization \eqref{RTinv}. We get for the partition function
\be\label{ZS3}
Z_{S^3}:=\tau(S^3)=i^{\frac{N(N-1)}2}q^{-\frac{N(N^2-1)}{12}}\left(N(k+N)^{N-1}\right)^{-\frac12}
\prod_{1\leq i<j\leq N}\!\!\left(1-q^{j-i}\right)
\ee
The first two factors are $1$ in absolute value and can be regarded as framing corrections and we already discussed the third (volume) factor. In any case, it turns out that only the last product is relevant in the context of large $N$ duality as we now demonstrate (cf.\cite[Appendix]{Kan})
\begin{lemma}\label{ZqBarnes}For any $|q|<1$
\be\label{zqBarnes} 
\prod_{1\leq i<j\leq N}\!\!\left(1-q^{j-i}\right)=\prod_{n=1}^{N-1}\left(1-q^n\right)^{N-n}
=(q;q)_\infty^N\ \,\frac{(q^{N+1};q)_\infty^{(2)}}{(q;q)_\infty^{(2)}},
\ee
where $(a;q)_\infty:=(a;q)_\infty^{(1)}$ is the usual $q$-shifted factorial \cite{GaR} and $(a;q)_\infty^{(d)}$ was defined in the Introduction \eqref{qfac}.
\end{lemma}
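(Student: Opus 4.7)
The plan is to prove the two equalities separately, each by a straightforward reindexing of the products. The first is a change of variables in the index set of pairs, and the second is a careful computation of the multiplicity with which each factor $(1-q^m)$ appears on the right-hand side.

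For the first equality, I would reparametrize the product $\prod_{1\leq i<j\leq N}(1-q^{j-i})$ by letting $n:=j-i$ and grouping pairs by the value of $n$. For each fixed $n\in\{1,\dots,N-1\}$, the pairs $(i,j)$ with $j-i=n$ and $1\leq i<j\leq N$ are exactly $(1,1+n),(2,2+n),\dots,(N-n,N)$, giving $N-n$ pairs. Hence $\prod_{1\leq i<j\leq N}(1-q^{j-i})=\prod_{n=1}^{N-1}(1-q^n)^{N-n}$.

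For the second equality, I would first rewrite each $q$-multifactorial on the right as a product indexed by a single integer. From the definition \eqref{qfac}, for any integer $M\geq 0$,
\[
(q^{M+1};q)_\infty^{(2)}=\prod_{i,j=0}^\infty(1-q^{M+1+i+j})=\prod_{m=M+1}^\infty(1-q^m)^{m-M},
\]
because the number of $(i,j)\in\Z_+^2$ with $i+j+1=m-M$ is $m-M$. Taking $M=0$ gives $(q;q)_\infty^{(2)}=\prod_{m=1}^\infty(1-q^m)^m$, and of course $(q;q)_\infty^N=\prod_{m=1}^\infty(1-q^m)^N$. Substituting into the right-hand side, the exponent of $(1-q^m)$ becomes
\[
N-m+\begin{cases}0,&1\leq m\leq N,\\ m-N,&m\geq N+1,\end{cases}
\]
which equals $N-m$ for $1\leq m\leq N-1$, equals $0$ for $m=N$, and equals $0$ for $m\geq N+1$ (the two contributions cancel). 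This matches the middle expression, completing the proof.

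Since both steps are elementary bookkeeping, there is no real obstacle; the only point requiring care is to correctly count the multiplicities in the $q$-multifactorials and to verify the cancellation in the range $m\geq N$, which justifies why the lemma simultaneously expresses the finite product as a ratio of infinite products convergent for $|q|<1$.
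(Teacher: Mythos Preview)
Your proof is correct and follows essentially the same approach as the paper. The first equality is handled identically by grouping pairs according to $n=j-i$, and for the second equality both you and the paper reduce to matching the exponent of each factor $(1-q^m)$; the only cosmetic difference is that the paper starts from the finite product and extends it to a ratio of infinite products, whereas you start from the right-hand side and compute multiplicities directly.
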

\begin{proof}
First we arrange the factors according to the powers of $q$
$$
\prod_{1\leq i<j\leq N}\!\!\left(1-q^{j-i}\right)=\prod_{n=1}^{N-1}\left(1-q^n\right)^{\sum_{i=1}^{N-1}1_{\{i|n+i\leq N\}}}
=\prod_{n=1}^{N-1}\left(1-q^n\right)^{N-n},
$$
where $1_S$ denotes the characteristic function of a set $S$. A finite product can be written as a ratio of two infinite ones as long as the latter converge, in particular
\begin{multline*}
\prod_{n=1}^{N-1}\left(1-q^n\right)^{N-n}
=\frac{\prod_{n=1}^\infty\left(1-q^n\right)^{N-n}}{\prod_{n=N}^\infty\left(1-q^n\right)^{N-n}}
=\frac{\prod_{n=1}^\infty\left(1-q^n\right)^{N-n}}{\prod_{n=0}^\infty\left(1-q^{N+n}\right)^{-n}}\\
=\prod_{n=1}^\infty\left(1-q^n\right)^{N}
\ \frac{\prod_{n=1}^\infty\left(1-q^{N+n}\right)^{n}}{\prod_{n=N}^\infty\left(1-q^n\right)^{n}}
=(q;q)_\infty^N\ \,\frac{(q^{N+1};q)_\infty^{(2)}}{(q;q)_\infty^{(2)}}.
\end{multline*}
\end{proof}
Setting $a=q^N$ in \eqref{zqBarnes} and comparing to the partition function of the resolved conifold \eqref{zrescon} we see that Chern-Simons theory reproduces both the reduced partition function and the Mac-Mahon factor. It also produces an additional one $(q;q)_\infty^N$ aside from the parasite factors discussed above. This extra factor appears naturally though  in the {\it quantum Barnes function} $G_q$. Its most characteristic property is the functional equation $G_q(z+1)=\G_q(z)G_q(z)$, where $\G_q$ is the quantum gamma function of Jackson \cite{AAR}. $\G_q$ is a $q$-deformation of the classical Euler's $\G$ and the classical Barnes function satisfies the same equation with $q$-s removed \cite{Ni1}. As we derive in the next section (\refT{Altz}) to have this property $G_q$ must be the product
$$
G_q(z+1)=(1-q)^{-\frac{z(z-1)}2}\,(q;q)_\infty^z\ \frac{(q^{z+1};q)_\infty^{(2)}}{(q;q)_\infty^{(2)}},
$$
i.e. reproduce the righthand side of \eqref{zqBarnes} with $z=N$ up to a power of $1-q$. Emergence of the same expressions from a simple functional equation is quite intriguing as well as the fact that they are the ones common in Chern-Simons and Gromov-Witten theories. We summarize the observations made in the last two sections as a theorem.
\begin{theorem}\label{Comparison} Quantum Barnes function $G_q$ is the common factor of the partition functions of the resolved conifold $X$ and the $3$-sphere, namely
\begin{align}
(1-q)^{\frac{z(z-1)}2}\,Z_X(q^z;q) &=(q;q)_\infty^{-z}\ \,G_q(z+1);\label{GqZX}\\
(1-q)^{\frac{z(z-1)}2}\,Z_{S^3}(z;q) &=i^{\frac{z(z-1)}2}q^{-\frac{z(z^2-1)}{12}}z^{-\frac12}
\left(\frac{\ln q}{2\pi i}\right)^{-\frac{z-1}2}\!G_q(z+1).\label{GqZS3}
\end{align}
In \eqref{GqZX} $q$ is the usual variable of the topological vertex and $a:=q^z$ is the degree variable \cite{L3Z}. In \eqref{GqZS3} $q=e^{2\pi i/(k+N)}$ and $z=N$, where $k,N$ are level and rank. Equivalently, if $t$ is the K\"ahler parameter and $x$ the string coupling constant of \cite{Mar2} then $q=e^{ix},\,q^z=e^{-t}$.
\end{theorem}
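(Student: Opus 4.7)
The plan is to specialize the already-derived closed forms for $Z_X$ (from \eqref{zrescon}) and $Z_{S^3}$ (from \eqref{ZS3}) and match them factor-by-factor against the alternating product formula for the quantum Barnes function that will be established as \refT{Altz} in \refS{S6}, namely
\begin{equation*}
G_q(z+1)\;=\;(1-q)^{-\frac{z(z-1)}{2}}\,(q;q)_\infty^{\,z}\,\frac{(q^{z+1};q)_\infty^{(2)}}{(q;q)_\infty^{(2)}}.
\end{equation*}
This is exactly \eqref{entGq} spelled out with the shorthand $(q;q)_\infty^{(0)}=1-q$ and $(q;q)_\infty^{(1)}=(q;q)_\infty$. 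Once this identity is in hand, each of \eqref{GqZX} and \eqref{GqZS3} reduces to an algebraic rearrangement.

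For \eqref{GqZX}, substitute $a=q^z$ into \eqref{zrescon} to obtain $Z_X(q^z;q)=(q^{z+1};q)_\infty^{(2)}/(q;q)_\infty^{(2)}$. Multiplying by $(1-q)^{z(z-1)/2}$ and then multiplying and dividing the right-hand side by $(q;q)_\infty^{\,z}$, the four factors $(1-q)^{z(z-1)/2}$, $(q;q)_\infty^{\,z}$, $(q^{z+1};q)_\infty^{(2)}$ and $(q;q)_\infty^{(2)}$ assemble (the last in the denominator) exactly into $G_q(z+1)$, leaving the prefactor $(q;q)_\infty^{-z}$. This is a one-line identification.

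For \eqref{GqZS3}, begin with the explicit form \eqref{ZS3} and apply \refL{ZqBarnes} to rewrite the product $\prod_{1\leq i<j\leq N}(1-q^{j-i})$ as $(q;q)_\infty^{N}(q^{N+1};q)_\infty^{(2)}/(q;q)_\infty^{(2)}$. Next, set $N=z$ and use the Chern-Simons quantization $q=e^{2\pi i/(k+N)}$ to replace $k+N$ by $2\pi i/\ln q$; the volume factor $\bigl(N(k+N)^{N-1}\bigr)^{-1/2}$ then becomes $z^{-\frac12}(\ln q/(2\pi i))^{-(z-1)/2}$ for the standard branch of the logarithm. Multiplying by $(1-q)^{z(z-1)/2}$ and matching the resulting product against the formula for $G_q(z+1)$ above yields \eqref{GqZS3}; the remaining prefactors $i^{z(z-1)/2}$ and $q^{-z(z^2-1)/12}$ are the framing and charge corrections already visible in \eqref{ZS3}.

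The substantive ingredient is the alternating product formula for $G_q$ proved in \refS{S6}; without it the parallelism between the Chern-Simons and Gromov-Witten expressions is invisible, and with it both parts of the theorem are bookkeeping. The only care required is in the branch conventions for the fractional powers of $\ln q$, $i$ and $q$, since in \eqref{GqZX} one treats $z$ as a continuous parameter while in \eqref{GqZS3} it specializes to the positive integer rank $N$; a consistent choice of principal branches makes the two normalizations agree.
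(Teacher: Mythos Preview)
Your proposal is correct and follows exactly the route the paper takes: the theorem is stated as a summary of the preceding discussion, and its proof amounts to reading off \eqref{zrescon}, \eqref{ZS3} and \refL{ZqBarnes}, then matching against the alternating formula \eqref{qBalt} for $G_q(z+1)$ established in \refT{Altz}. The only point worth flagging is that in the paper the argument is implicit in the text between \refL{ZqBarnes} and the theorem statement rather than given as a separate proof, so your write-up is in fact more explicit than the original.
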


\section{Quantum multigamma hierarchy}\label{S6}

In this section we give a self-contained introduction into the theory of the quantum Barnes function and its higher order analogs, quantum multigammas. The main result is the alternating formula \eqref{altz} for $G^{(d)}_q$ that displays its graded product structure. Along the way we describe connections to other special functions that came into the spotlight lately, $q$-shifted multifactorials and quantum polylogarithms. 

\smallskip

\noindent {\bf Quantum multigammas.}\quad Quantum multigammas emerge naturally if one iterates two classical constructions. One is the construction of factorials from natural numbers with subsequent analytic continuation to complex values via the functional equation $\G(z+1)=z\G(z)$. The other is a $q$-deformation of $\G$ first performed by Jackson, thorougly forgotten and then revived by Askey in 1970-s, see \cite{AAR}. Euler's construction was iterated by Kinkelin in 1860-s who turned $\G(z)$ into a new $z$, i.e  considered the equation $G(z+1)=\G(z)G(z)$ but only for positive integers $z$. Barnes in 1900-s introduced $G(z)$ from entirely different considerations that define it for complex values directly and lead to a hierarchy of functions with $G^{(0)}(z)=z,G^{(1)}(z)=\G(z),G^{(2)}(z)=G(z),\dots; G^{(d)}(z+1)=G^{(d-1)}(z)G^{(d)}(z)$. Nowadays $G(z)$ is known as the (classical) {\it Barnes function} and $G^{(d)}$ as {\it multigamma functions} \cite{Ni3}.

It is clear that the hierarchy of functional equations together with a normalization $G^{(d)}(1)=1$ uniquely define $G^{(d)}$ on all natural numbers. Extension to complex values can not be unique even for $\G$ because there are entire functions that vanish on all integers, $\sin(\pi z)$ for example. However, Bohr and Mollerup proved in 1920-s that if a log-concavity condition is added $\frac{d^2}{dx^2}\G(x)\geq0$ for $x\geq0$ then the Euler's $\G$ is the only possibility \cite{AAR}. In 1963 Dufresnoy and Pisot generalized the Bohr-Mollerup existence and uniqueness result to a wide class of functional equations of the type $f(z+1)-f(z)=\phi(z)$. It constructs not only $G^{(d)}$ but also the Jackson's deformation of Euler's $\G$ that satifies $\G_q(z+1)=(z)_q\,\G_q(z)$ with $(z)_q:=\frac{1-q^z}{1-q}$ called the {\it quantum number} \cite{GaR,Mc}. For higher order multigammas the log-concavity condition has to be replaced with log-positivity of order $d+1$, i.e.
$\frac{d^{d+1}}{dx^{d+1}}\ln\G(x)\geq0$ for $x\geq0$. After $q$-deformations of Barnes' multigammas appeared in the context of integrable hierarchies \cite{FZ} Nishizawa realized that the same iteration works for them as well \cite{Ni1}. In other words, there exists a unique hierarchy of meromorphic functions satisfying
\begin{align}\label{gqdef}
\text{(i)}\   &G_q^{(d)}(1)=1,\quad G_q^{(0)}(z)=\frac{1-q^z}{1-q}\notag\\
\text{(ii)}\  &G_q^{(d)}(z+1)=G_q^{(d-1)}(z)\,G_q^{(d)}(z)\\
\text{(iii)}\ &\frac{d^{d+1}}{dx^{d+1}}\,\ln G_q(x)\geq0\ \ \text{for}\ \ x\geq0,\,0<q<1\notag
\end{align}
The last condition is only required to hold for real positive $q$ but it is understood that $G_q^{(d)}$ are continued analytically to other values of $q$.
\begin{definition}[Quantum multigammas]\label{Gqdef} The functions $G_q^{(d)}$ defined by \eqref{gqdef} are called $q$-multigamma functions, in particular $\G_q:=G_q^{(1)}$ is the Jackson's quantum gamma function and $G_q:=G_q^{(2)}$ is the {\sl quantum Barnes (or $q$-Barnes) function}.
\end{definition}
\noindent For $|q|<1$ we will show the existence of $G_q^{(d)}$ independently by deriving explicit expressions for it. In addition to integrable hierarchies these functions also appear in analytic number theory, see \cite[Remark 3]{Wk}.
\smallskip

\noindent {\bf $q$-Multifactorials.} In \cite{Ni1} Nishizawa derives an explicit combinatorial formula for $G_q^{(d)}$ (see \eqref{niprod}). A more illuminating formula for our purposes can be expressed in terms of $q$-shifted multifactorials, $q$-multifactorials for short (\refT{Altz}). We streamline Nishizawa's approach by introducing them and systematically using generating functions. 

The classical $q$-shifted factorial as defined by Euler is $(a;q)_\infty:=\prod_{i=0}^\infty(1-aq^i)$. This notation is widely used in the theories of basic hypergeometric series \cite{GaR}, modular forms and partitions \cite{Mc}.
A natural generalization essentially due to Appell is
\begin{definition}[$q$-multifactorials]\label{Qmulfac}A $q$-shifted $d$-factorial is 
\begin{align}\label{qmulfac}
(a;q)_\infty^{(0)} &:=1-a\quad\text{{\rm (no $q$ dependence);}}\notag\\ 
(a;q)_\infty^{(d)} &:=\prod_{i_1,\dots,i_d=0}^\infty(1-aq^{i_1+\cdots+i_d}),\quad|q|<1,\,d=1,2,\dots\,.
\end{align}
For finite $N$ we set 
\be\label{qNmulfac}
(a;q)_N^{(d)}:=\frac{(a;q)_\infty^{(d)}}{(aq^N;q)_\infty^{(d)}}.
\ee
\end{definition}
Our indexing convention is in line with \cite{Wk} but differs from \cite{Ni2}, our $(a;q)_\infty^{(d)}$ is Nishizawa's $(a;q)_\infty^{(d-1)}$. Also one should not confuse our $q$-multifactorials with {\it multiple $q$-factorials} that have a separate variable $q_{k}$ for each index. Our definition is recovered if all $q_{k}$ are set equal to $q$. We already used $(a;q)_\infty^{(2)}$ to write partition functions in a closed form. 

Unlike the case $d=1$ the finite version $(a;q)_N^{(d)}$ is no longer a finite product of $(1-aq^i)$ unless $d$ divides $N$. Nevertheless we have 
\begin{multline*}
\frac{\prod_{i_1,\dots,i_d=0}^\infty(1-aq^{i_1+\cdots+i_d})}{\prod_{i_1,\dots,i_d=0}^\infty(1-aq^{N+i_1+\cdots+i_d})}
=\frac{\prod_{i_{d}=0}^\infty\prod_{i_1,\dots,i_{d-1}=0}^\infty(1-aq^{i_1+\cdots+i_d})}
{\prod_{i_{d}=N}^\infty\prod_{i_1,\dots,i_{d-1}=0}^\infty(1-aq^{N+i_1+\cdots+i_d})}\\
=\frac{\prod_{i=0}^\infty(aq^i;q)_\infty^{(d-1)}}{\prod_{i=N}^\infty(aq^i;q)_\infty^{(d-1)}}
=\prod_{i=0}^{N-1}(aq^i;q)_\infty^{(d-1)},
\end{multline*}
and by definition 
\be\label{qNprod}
(a;q)_N^{(d)}=\prod_{i=0}^{N-1}(aq^i;q)_\infty^{(d-1)}, \quad d\geq1.
\ee
A similar calculation gives another useful identity
\be\label{aqd}
(aq;q)_\infty^{(d)}=\frac{(a;q)_\infty^{(d)}}{(a;q)_\infty^{(d-1)}}.
\ee
Also $q$-multifactorials can be expressed as a single product if we group the factors by powers of $q$. To this end it is convenient to introduce the {\it Stirling polynomials}
\be\label{Stirpol}
\binom{z}{0}:=1\qquad\binom{z}{n}:=\frac{z(z-1)\cdots(z-n+1)}{n!},\quad n\geq1,
\ee
that reduce to the binomial coefficients when $z=N\geq n$ is a positive integer. Some of their properties are reviewed in the Appendix. From a generating function for Stirling polynomials \eqref{Stir-gen}
\begin{multline}\label{Stircount}
\sum_{i_1,\dots,i_d=0}^\infty
t^{i_1+\cdots+i_d}=\sum_{n=0}^\infty\left(\sum_{i_1+\cdots+i_d=n}\!\!\!\!1\right)\,t^n
=\left(\sum_{i=0}^\infty\,t^i\right)^d\\
=(1-t)^{-d}=\sum_{n=0}^\infty\binom{d+n-1}{n}\,t^n.
\end{multline}
In other words, $\binom{d+n-1}{n}$ is the number of arrangements of $d$ non-negative integers adding up to $n$. 
As a consequence we have {\it single product representations}
\begin{align}\label{singprod}
(a;q)_\infty^{(d)} &:=\prod_{i_1,\dots,i_d=0}^\infty(1-aq^{i_1+\cdots+i_d})
=\prod_{n=0}^\infty(1-aq^n)^{\binom{d+n-1}{n}}\\ 
(aq;q)_\infty^{(d)} &=\prod_{n=1}^\infty(1-aq^n)^{\binom{d+n-2}{n-1}}.\notag
\end{align}
In particular, $1/(q;q)_\infty^{(2)}=\prod_{n=1}^\infty(1-q^n)^{-\binom{n}{1}}=\M(q)$ as we claimed before. They also imply that $-\ln(a;q)_\infty^{(d)}$ is nothing other than the {\it quantum polylogarithm} of Kirillov \cite[Ex.\,2.5.8]{Ki1},\,\cite{Ni2} defined for $|a|,|q|<1$ as 
\be\label{qLi}
\Li_s(a;q):=\sum_{k=1}^\infty\frac{a^k}{k(1-q^k)^{s-1}}.
\ee
Indeeed, by \eqref{Stir-gen} 
\begin{multline}\label{mfacqLi}
-\ln(a;q)_\infty^{(d)}=-\sum_{n=0}^\infty\binom{d+n-1}{n}\,\ln(1-aq^n)
=\sum_{n=0}^\infty\binom{d+n-1}{n}\sum_{k=1}^\infty\frac{(aq^n)^k}{k}\\
=\sum_{k=1}^\infty\frac{a^k}{k}\sum_{n=0}^\infty\binom{d+n-1}{n}(q^k)^n
=\sum_{k=1}^\infty\frac{a^k}{k}(1-q^k)^{-d}=\Li_{d+1}(a;q)
\end{multline}
The identity holds for any $a\in\C$ by analytic continuation.
\smallskip

\noindent {\bf Closed formulas for $G_q^{(d)}$.} Typically when a classical object is $q$-deformed its theory becomes more complicated. Refreshingly, $q$-multigammas for $|q|<1$ are an exception: their theory is much simpler than its classical counterpart. The underlying reason is that it is possible to write finite products as ratios of infinite ones \eqref{qNmulfac}, the latter having a straightforward extension from integers $N$ to complex values $z$. An analogous attempt to write $N!=\frac{1\cdot2\cdot3\cdots}{(N+1)\cdot(N+2)\cdot(N+3)\cdots}$ leads to a nonsensical product of all natural numbers. The closest classical imitation is the Gauss product formula
$$
\G(z+1)=\lim_{n\to\infty}\frac{(n+1)!}{(z+1)\cdots(z+n+1)}\,n^z
$$
that requires the rather involved theory of Weierstrass products. One can fruitfully turn things around and derive product formulas for $\G$ and classical higher multigammas $G^{(d)}$ from the $q$-deformed ones via a limiting procedure \cite{Ni3}.

We leave as an exercise to the reader to iterate the functional equation for $G_q^{(d)}$ and derive by induction from \eqref{qNprod} that for non-negative integers $N$
\begin{align}\label{altN}
G_q^{(d)}(N+1) &=(q;q)_\infty^{(0)\ -\binom{N}{d}}\ (q;q)_\infty^{(1)\ \binom{N}{d-1}}
\cdots\ (q;q)_\infty^{(d)\ (-1)^{d+1}\binom{N}{0}}\ \ \ (q^{N+1};q)_\infty^{(d)\ (-1)^{d}}\notag\\
&=\prod_{i=0}^d\,(q;q)_\infty^{(i)\ (-1)^{i+1}\binom{N}{d-i}}\ \ \ (q^{N+1};q)_\infty^{(d)\ (-1)^{d}}.
\end{align}
Now $N$ can be painlessly replaced with any complex $z$ as long as we stipulate that $q\in\D\backslash\R_-$. This way first the infinite products converge since $q$ is inside the unit disk $\D$, and second $q^{z}:=e^{z\ln q}$ is defined by choosing the principal branch of the logarithm. Of course, a priori there is no guarantee that the function so extended coincides with $G_q^{(d)}$ of \refD{Gqdef}. We now prove that this is the case.
\begin{theorem}[Alternating formula]\label{Altz} Let $q\in\D\backslash\R_-$ with $\D$ the open unit disk in $\C$. 
Then $G_q^{(d)}(z)$ is an {\sl entire} function of $z$ given by 
\be\label{altz}
G_q^{(d)}(z+1)=\prod_{i=0}^d\,(q;q)_\infty^{(i)\ (-1)^{i+1}\binom{z}{d-i}}\ \ (q^{z+1};q)_\infty^{(d)\ (-1)^{d}},
\ee
where $(q;q)_\infty^{(i)}$ are the $q$-multifactorials \eqref{qmulfac} and  $\binom{z}{n}$ are the Stirling polynomials \eqref{Stirpol}. In particular, for quantum gamma and Barnes functions ($d=1,2$)
\begin{align}
\G_q(z+1) &=\frac1{(q;q)_\infty^{(0)\,\,z}}\ \ (q;q)_\infty^{(1)}\ \ \frac1{(q^{z+1};q)_\infty^{(1)}}\label{qGalt}\\
G_q(z+1) &=\frac1{(q;q)_\infty^{(0)\ \frac{z(z-1)}2}}\ \ (q;q)_\infty^{(1)\,\,z}\ \ \frac1{(q;q)_\infty^{(2)}}
\ \ (q^{z+1};q)_\infty^{(2)}.\label{qBalt}
\end{align}
Recall that $(q;q)_\infty^{(0)}:=1-q,\ (q;q)_\infty^{(1)}:=(q;q)_\infty$.
\end{theorem}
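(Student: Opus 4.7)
The strategy is to define the right-hand side of \eqref{altz} as $F_q^{(d)}(z+1)$ and show that $F_q^{(d)}$ meets the three characterizing conditions of \eqref{gqdef}, so that the Dufresnoy-Pisot uniqueness invoked before \refD{Gqdef} (applied to $\ln G_q^{(d)}$, whose functional equation is of the form $f(z+1)-f(z)=\phi(z)$) forces $F_q^{(d)}=G_q^{(d)}$. Four items must be checked: entirety in $z$, the normalization $F_q^{(d)}(1)=1$ together with the base case $F_q^{(0)}(z)=(1-q^z)/(1-q)$, the recursion $F_q^{(d)}(z+1)=F_q^{(d-1)}(z)F_q^{(d)}(z)$, and the log-positivity condition (iii).

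\textbf{Entirety and initial data.} For $|q|<1$, the infinite product $(q^{z+1};q)_\infty^{(d)}$ converges uniformly on compacta to a nowhere-vanishing entire function of $z$, while the remaining constants are raised to polynomial exponents $\binom{z}{d-i}$. At $z=0$ one has $\binom{0}{d-i}=\delta_{id}$, so only the $i=d$ term in the finite product survives, producing $(q;q)_\infty^{(d)\,(-1)^{d+1}}$; this cancels the trailing $(q;q)_\infty^{(d)\,(-1)^d}$ and yields $F_q^{(d)}(1)=1$. For $d=0$ only the $i=0$ term contributes, giving $F_q^{(0)}(z+1)=(1-q^{z+1})/(1-q)$ as required.

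\textbf{Functional equation.} This is the algebraic heart of the argument. Dividing the formula for $F_q^{(d)}(z+1)$ by that for $F_q^{(d)}(z)$, the exponent of $(q;q)_\infty^{(i)}$ becomes $(-1)^{i+1}\bigl[\binom{z}{d-i}-\binom{z-1}{d-i}\bigr]$; by Pascal's rule for Stirling polynomials (Appendix) this equals $(-1)^{i+1}\binom{z-1}{(d-1)-i}$, and the $i=d$ contribution drops because $\binom{z-1}{-1}=0$. The trailing multifactorial ratio is computed from \eqref{aqd} with $a=q^z$:
\[ \frac{(q^{z+1};q)_\infty^{(d)}}{(q^z;q)_\infty^{(d)}} = \frac{1}{(q^z;q)_\infty^{(d-1)}}, \]
so it contributes $(q^z;q)_\infty^{(d-1)\,(-1)^{d-1}}$. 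The resulting expression is precisely the right-hand side of \eqref{altz} with $d$ replaced by $d-1$ and argument $(z-1)+1=z$, namely $F_q^{(d-1)}(z)$.

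\textbf{Log-positivity and conclusion.} The finite sum $\sum_{i=0}^d (-1)^{i+1}\binom{x}{d-i}\ln(q;q)_\infty^{(i)}$ is a polynomial in $x$ of degree at most $d$, so only the term $(-1)^d\ln(q^{x+1};q)_\infty^{(d)}$ survives the $(d+1)$-st derivative. Using the polylogarithm representation \eqref{mfacqLi} and $(\ln q)^{d+1}=(-1)^{d+1}|\ln q|^{d+1}$ for $0<q<1$,
\[ \frac{d^{d+1}}{dx^{d+1}}\ln F_q^{(d)}(x+1) = \sum_{k=1}^\infty \frac{k^d\,|\ln q|^{d+1}\,q^{k(x+1)}}{(1-q^k)^d}, \]
which is a positive series for $x\geq 0$, establishing (iii). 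The main obstacle is the coordinated bookkeeping of Pascal's rule, identity \eqref{aqd}, and the alternating signs $(-1)^i$ in the recursion step; once this telescoping is verified, Dufresnoy-Pisot uniqueness determines $G_q^{(d)}$ for $q\in(0,1)$ and real positive $z$, and analytic continuation in both variables extends the identity to $q\in\D\setminus\R_-$ and $z\in\C$. The closed forms \eqref{qGalt} and \eqref{qBalt} then follow by specializing $d=1,2$ and using $\binom{z}{0}=1$, $\binom{z}{1}=z$, $\binom{z}{2}=z(z-1)/2$.
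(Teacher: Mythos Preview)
Your proof is correct and follows essentially the same route as the paper: define the right-hand side, verify the three characterizing conditions \eqref{gqdef}, and appeal to uniqueness. The paper checks the functional equation by multiplying $G_q^{(d-1)}(z)\,G_q^{(d)}(z)$ and recognizing $G_q^{(d)}(z+1)$ rather than by dividing $F_q^{(d)}(z+1)/F_q^{(d)}(z)$, but this is the same Pascal-rule-plus-\eqref{aqd} computation run in the opposite direction, and the log-positivity argument via \eqref{mfacqLi} is identical (one small slip: $(q^{z+1};q)_\infty^{(d)}$ is entire but not nowhere-vanishing---it has zeros where $q^{z+1+n}=1$---though this does not affect your argument since on the real axis with $0<q<1$ no zeros occur).
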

\begin{proof}
We have to verify that the righthand side of \eqref{altz} satisfies all three conditions of \eqref{gqdef}. For the duration of the proof we use $G_q^{(d)}$ only as an alias for this righthand side. Normalizations (i) follow by direct substitution of values. For the functional equation (ii) we compute
\begin{align}\label{funprod}
G_q^{(d-1)}(z)\,G_q^{(d)}(z) &=\prod_{i=0}^{d-1}\,(q;q)_\infty^{(i)\ (-1)^{i+1}\binom{z-1}{d-i-1}}\ 
\prod_{i=0}^d\,(q;q)_\infty^{(i)\ (-1)^{i+1}\binom{z-1}{d-i}}\notag\\
&\hspace{3cm} \cdot\ (q^{z};q)_\infty^{(d-1)\ (-1)^{d-1}}\ (q^{z};q)_\infty^{(d)\ (-1)^{d}}\\
&=\prod_{i=0}^{d-1}\,(q;q)_\infty^{(i)\ (-1)^{i+1}\left[\binom{z-1}{d-i-1}+\binom{z-1}{d-i}\right]}
\ \,(q;q)_\infty^{(d)\ (-1)^{d+1}}\left(\frac{(q^{z};q)_\infty^{(d)}}{(q^{z};q)_\infty^{(d-1)}}\right)^{(-1)^{d}}.\notag
\end{align}
By \eqref{Stirid} with $n=d-i$ the sum in brackets is just $\binom{z}{d-i}$ so the first two factors combine into 
$\prod_{i=0}^{d}\,(q;q)_\infty^{(i)\ (-1)^{i+1}\binom{z}{d-i}}$\!\!. For the last factor, \eqref{aqd} with $a=q^z$ yields
$$
\frac{(q^z;q)_\infty^{(d)}}{(q^z;q)_\infty^{(d-1)}}=(q^{z+1};q)_\infty^{(d)}
$$
so \eqref{funprod} becomes
$$
G_q^{(d-1)}(z)\,G_q^{(d)}(z)=\prod_{i=0}^{d}\,(q;q)_\infty^{(i)\ (-1)^{i+1}\binom{z}{d-i}}
(q^{z+1};q)_\infty^{(d)}=G_q^{(d)}(z+1).
$$
It remains to verify the log-positivity condition (iii). Taking the logarithm we have
\be\label{altzln}
\ln G_q^{(d)}(z+1)=\sum_{i=0}^d\,(-1)^{i+1}\,\ln(q;q)_\infty^{(i)}\,\binom{z}{d-i}+(-1)^{d}\,\ln(q^{z+1};q)_\infty^{(d)}.
\ee
Since $\binom{z}{d-i}$ is a polynomial in $z$ of degree $d-i$ the sum above is a polynomial of degree $d$ and its ${d+1}$-st derivative vanishes. Recalling \eqref{mfacqLi} one obtains
$$
\frac{d^{d+1}}{dz^{d+1}}\,\ln G_q^{(d)}(z)=(-1)^{d+1}\frac{d^{d+1}}{dz^{d+1}}\,\Li_{d+1}(q^z;q).
$$
It is helpful to notice that $\dfrac{d}{dz}f(q^z)=\ln q\ x\left. \dfrac{df}{dx}\right|_{x=q^z}$ and iterating
$$
(-1)^{d+1}\,\frac{d^{d+1}}{dz^{d+1}}\,f(q^z)
=\left.(-\ln q)^{d+1}\,\left(x\frac{d}{dx}\right)^{d+1}\!\!\!f(x)\right|_{x=q^z}.
$$
Now for $|x|<1$ by \eqref{qLi} 
$$
\left(x\frac{d}{dx}\right)^{d+1}\!\!\!\Li_{d+1}(x;q)
=\sum_{n=1}^\infty\frac{1}{n(1-q^n)^{d}}\left(x\frac{d}{dx}\right)^{d+1}\!\!\!x^n
=\sum_{n=1}^\infty\frac{n^{d+1}}{n(1-q^n)^{d}}\ x^n
$$
and therefore
$$
\frac{d^{d+1}}{dz^{d+1}}\,\ln G_q^{(d)}(z)
=(-\ln q)^{d+1}\sum_{n=1}^\infty\left(\frac{n}{1-q^n}\right)^{d}\!\!q^{nz}\quad\text{for }|q^z|<1.
$$
If $0<q<1$ and $z$ is real positive then $-\ln q>0$ and $0<q^z<1$ so the last expression is positive by inspection.
\end{proof}
The structure of $G_q^{(d)}$ is perhaps most transparent in \eqref{altzln}. We have the {\it main term} $(-1)^{d}\,\ln(q^{z+1};q)_\infty^{(d)}$ that depends on $a=q^z$ only and the {\it anomaly term} polynomial in $z$ of degree $d$. For $d=2$ we recognize $\ln(q^{z+1};q)_\infty^{(2)}$ as exactly the reduced free energy of the resolved conifold, see  
\eqref{z'rescon}. The main/anomaly structure of $G_q^{(d)}$ becomes even more transparent if we introduce a generating function for the entire hierarchy
\bee
\sum_{d=0}^\infty\ln G_q^{(d)}(z+1)\,t^d
=(1+t)^z\sum_{d=0}^\infty\,\,(-1)^d\Li_{d+1}(q;q)\,t^d-\sum_{d=0}^\infty\,(-1)^d\Li_{d+1}(q^{z+1};q)\,t^d.
\eee
One can check that the sum converges for $|q|,|q^{z}|<1$, $|t|<1-|q|$. This formula displays very clearly the alternating nature of $G_q^{(d)}$ and the distinction between the main and the anomaly terms. The dependence of the anomaly on $z$ is regulated by a universal term $(1+t)^z$ that explains the convolution structure of the anomaly sum in \eqref{altzln}.

This structure is quite common for functions appearing in quantum field theory. The main terms usually reflect the expected symmetry of a system while anomalies require additional choices to be defined. For instance, Witten's Chern-Simons path integral is not a $3$-manifold invariant because of the {\it framing anomaly} \cite{At,Wj} but can be turned into one by choosing the canonical $2$-framing to cancel the anomaly \cite{T}. In our case the obvious extra choice is that of a logarithm branch to define $z:=\log_qa$. Disregarding anomalies comes at a price: fixing the canonical $2$-framing would complicate the gluing rules and $G_q^{(d)}$ would not obey a simple functional equation. The Reshetikhin-Turaev normalization may be to blame for nasty prefactors in \eqref{GqZS3} that spoil the duality. It might be of interest to find an analog of $2$-framings on the Gromov-Witten side and compare answers without making choices, even canonical ones. 

From the alternating formula \eqref{altz} we now derive a single product formula for $G_q^{(d)}$ originally given by Nishizawa \cite{Ni1}.
\begin{theorem}[Nishizawa product]\label{Niprod} Let $q\in\D\backslash\R_-$ then quantum multigammas are given by 
\be\label{niprod}
G_q^{(d)}(z+1)=(1-q)^{-\binom{z}{d}}
\prod_{n=1}^\infty\frac{(1-q^n)^{\binom{z-n}{d-1}}}{(1-q^{z+n})^{\binom{-n}{d-1}}}\quad\text{for }d\geq1.
\ee
In particular, for quantum gamma and Barnes functions ($d=1,2$)
\begin{align}
\G_q(z+1) &=(1-q)^{-z}\prod_{n=1}^\infty\,\left(\frac{1-q^{z+n}}{1-q^n}\right)^{-1}\label{qGprod}\\
G_q(z+1) &=(1-q)^{-\frac{z(z-1)}2}\prod_{n=1}^\infty\,\frac{(1-q^{z+n})^n}{(1-q^n)^{-z+n}}\label{qBprod}
\end{align}
\end{theorem}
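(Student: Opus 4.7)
The plan is to derive \eqref{niprod} directly from the alternating formula \eqref{altz} of Theorem~\ref{Altz} by grouping factors according to powers of $q$. First I would take the $q$-multifactorials appearing in \eqref{altz} and rewrite each one as a single infinite product using \eqref{singprod}, namely $(q;q)_\infty^{(i)}=\prod_{n=1}^\infty(1-q^n)^{\binom{i+n-2}{n-1}}$ for $i\geq1$ (with $(q;q)_\infty^{(0)}=1-q$ handled separately), and $(q^{z+1};q)_\infty^{(d)}=\prod_{n=1}^\infty(1-q^{z+n})^{\binom{d+n-2}{n-1}}$. This reorganizes $G_q^{(d)}(z+1)$ as $(1-q)^{-\binom{z}{d}}$ times two infinite products, one over factors $(1-q^n)$ and one over factors $(1-q^{z+n})$.

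Collecting exponents, the power of $(1-q^n)$ for $n\geq1$ comes out to
$$E_n(z):=\sum_{i=1}^{d}(-1)^{i+1}\binom{z}{d-i}\binom{i+n-2}{n-1},$$
while the power of $(1-q^{z+n})$ is $(-1)^d\binom{d+n-2}{n-1}$. The plan is then to identify these two exponents with the ones appearing in \eqref{niprod}. The second one is immediate from the symmetry of binomial coefficients and the reflection formula
$$\binom{-n}{d-1}=(-1)^{d-1}\binom{n+d-2}{d-1}=(-1)^{d-1}\binom{n+d-2}{n-1},$$
which gives $(-1)^d\binom{d+n-2}{n-1}=-\binom{-n}{d-1}$, matching the denominator in \eqref{niprod}.

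The main obstacle, and the only substantive combinatorial step, is showing $E_n(z)=\binom{z-n}{d-1}$. For this I would substitute $k:=i-1$ to rewrite
$$E_n(z)=\sum_{k=0}^{d-1}(-1)^{k}\binom{n+k-1}{k}\binom{z}{d-1-k},$$
and then apply the Chu--Vandermonde identity for Stirling polynomials listed in the Appendix, which in combination with $\binom{-n}{k}=(-1)^k\binom{n+k-1}{k}$ yields
$$\binom{z-n}{d-1}=\sum_{k=0}^{d-1}\binom{-n}{k}\binom{z}{d-1-k}=\sum_{k=0}^{d-1}(-1)^{k}\binom{n+k-1}{k}\binom{z}{d-1-k}.$$
Thus $E_n(z)=\binom{z-n}{d-1}$, and the Nishizawa formula \eqref{niprod} follows. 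The specializations \eqref{qGprod}--\eqref{qBprod} for $d=1,2$ are then obtained by direct substitution, using $\binom{z-n}{0}=1$ for $d=1$ and $\binom{z-n}{1}=z-n$, $\binom{-n}{1}=-n$ for $d=2$.
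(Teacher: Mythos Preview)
Your proposal is correct and follows essentially the same route as the paper: start from the alternating formula \eqref{altz}, expand each $(q;q)_\infty^{(i)}$ and $(q^{z+1};q)_\infty^{(d)}$ via \eqref{singprod}, and collect exponents of $(1-q^n)$ and $(1-q^{z+n})$. The paper keeps the $i=0$ term inside the sum and therefore needs the two-case identity \eqref{StirBin} (proved in the Appendix by generating functions), whereas you peel off $(q;q)_\infty^{(0)}=(1-q)$ first, which makes the remaining exponent sum $E_n(z)$ uniform in $n$ and reduces the combinatorics to a single Vandermonde convolution. One small inaccuracy: the Appendix does not state Chu--Vandermonde as such; what it actually proves is \eqref{StirBin}, which is equivalent to the identity you invoke but is not labeled that way.
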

\begin{proof} Applying \eqref{singprod} to the first product in \eqref{altz} we get
\begin{multline}\label{sumexp}
\prod_{i=0}^d\,(q;q)_\infty^{(i)\ (-1)^{i+1}\binom{z}{d-i}}
=\prod_{i=0}^d\prod_{n=0}^\infty(1-q^{n+1})^{(-1)^{i+1}\,\binom{n+i-1}{n}\,\binom{z}{d-i}}\\
=\prod_{n=0}^\infty(1-q^{n+1})^{\sum_{i=0}^d(-1)^{i+1}\,\binom{n+i-1}{n}\,\binom{z}{d-i}}.
\end{multline}
As shown in the Appendix \eqref{StirBin}, when $d\geq1$
$$
\sum_{i=0}^d\,(-1)^{i+1}\,\binom{n+i-1}{n}\,\binom{z}{d-i}
=\begin{cases}\binom{z-n-1}{d-1}, &\,n\geq1\\ \binom{z-1}{d-1}-\binom{z}{d}, &\,n=0.\end{cases}
$$
Splitting the $n=0$ and $n\geq1$ factors in \eqref{sumexp} we see that it equals
\begin{multline*}
(1-q)^{\binom{z-1}{d-1}-\binom{z}{d}}\ \prod_{n=1}^\infty(1-q^{n+1})^{\binom{z-n-1}{d-1}}
=(1-q)^{-\binom{z}{d}}\ \prod_{n=0}^\infty(1-q^{n+1})^{\binom{z-n-1}{d-1}}\\
=(1-q)^{-\binom{z}{d}}\ \prod_{n=1}^\infty(1-q^{n})^{\binom{z-n}{d-1}}.
\end{multline*}
The second factor in \eqref{altz} can be transformed into a single product using \eqref{singprod} again
$$
\prod_{n=1}^\infty(1-q^{z+n})^{(-1)^{d}\binom{d+n-2}{d-1}}=\prod_{n=1}^\infty(1-q^{z+n})^{\binom{-n}{d-1}},
$$
where we also applied \eqref{Stir-} and \eqref{Stirid} to get the second expression. Multiplying the righthand 
sides of the last two formulas gives the claim.
\end{proof}
Formula \eqref{qGprod} is the standard expression given for the quantum gamma function, see e.g. \cite{AAR}, and \eqref{qBprod} is its closest quantum Barnes analog. Although Nishizawa product \eqref{niprod} may appear prettier than 
the alternating formula \eqref{altz} it completely hides the graded structure of $q$-multigammas.
\smallskip

\noindent {\bf What about $|q|=1$?} By \refT{Altz} $q$-multigammas are {\it entire} functions of $z$ for $|q|<1$. Things 
change if we venture onto the unit circle. As one can judge by the example of the classical Euler's $\G$ 
where $q=1$, \eqref{gqdef} still defines a unique function of $z$ but this time it is only {\it meromorphic}. The appearence of poles prevents infinite products \eqref{niprod} from converging. When $|q|=1$ but {\it $q$ is not a root of unity} $G_q^{(1)}=\G_q$ is constructed explicitly in \cite{NiU} via Shintani's double sine function (see \cite{Wk}). This case is complementary to the classical one $q=1$ and the poles are located at the points $-n-m/\tau$ with $n,m\in\Z$ and $q=e^{2\pi i\tau}$. Clearly, it is desirable to have a similar construction for $G_q^{(2)}=G_q$ especially when $q$ {\it is} a root of unity. Indeed, this case is most relevant to the Chern-Simons theory where $G_q$ is essentially the partition function of $S^3$ by \refT{Comparison} and probably appears as a factor in partition functions of other manifolds.

There is a small window through which we can peak at what happens on the unit circle for any $q$. When $z=N$ is a non-negative integer the product \eqref{niprod} terminates. In the context of Chern-Simons theory $N$ is the rank of $\Sl_N\C$ and we already saw this phenomenon for the quantum Barnes function in \refL{ZqBarnes}. This also generalizes the fact that although $\G(z+1)$ can not be simply expressed as an infinite product for complex $z$ we have nonetheless
$\G(N+1)=1\cdots N$.
\begin{corollary}[Nishizawa]\label{Nniprod}
For a non-negative integer $N$ and any $q\in\C$ one has 
\be\label{nniprod}
G_q^{(d)}(N+1)=(1-q)^{-\binom{N}{d}}\,\prod_{n=1}^N\ (1-q^n)^{\binom{N-n}{d-1}}.
\ee
\end{corollary}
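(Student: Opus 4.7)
The plan is to specialize Nishizawa's product \eqref{niprod} to $z=N$ and exploit a telescoping cancellation between the infinite numerator and denominator products. The key observation is that when $z$ is a non-negative integer, the denominator is exactly a shifted copy of the tail of the numerator, so only a finite product survives.

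First I would reindex the denominator in \eqref{niprod} by setting $m=N+n$, so that $m$ runs from $N+1$ to $\infty$ and the exponent becomes $\binom{-(m-N)}{d-1}=\binom{N-m}{d-1}$, viewed as the Stirling polynomial \eqref{Stirpol} evaluated at $N-m$. The denominator then reads $\prod_{m=N+1}^\infty(1-q^m)^{\binom{N-m}{d-1}}$, which cancels the tail $n\geq N+1$ of the numerator $\prod_{n=1}^\infty(1-q^n)^{\binom{N-n}{d-1}}$ and leaves precisely $\prod_{n=1}^N(1-q^n)^{\binom{N-n}{d-1}}$. Together with the prefactor $(1-q)^{-\binom{N}{d}}$ this yields the claimed formula for $q\in\D\setminus\R_-$, the range in which \refT{Niprod} is stated.

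To extend to arbitrary $q\in\C$, I would note that for $n=1,\ldots,N$ the exponents $\binom{N-n}{d-1}$ are non-negative integers (they vanish when $0\leq N-n<d-1$ and are positive otherwise), so the right-hand side is a rational function of $q$ with at most a single pole at $q=1$. The left-hand side is also rational in $q$: iterating the functional equation \eqref{gqdef}(ii) gives $G_q^{(d)}(N+1)=\prod_{k=1}^N G_q^{(d-1)}(k)$, which by induction on $d$ reduces to a finite algebraic expression in $q$ starting from $G_q^{(0)}(z)=(1-q^z)/(1-q)$. Two rational functions that agree on the open set $\D\setminus\R_-$ coincide on all of $\C$, so the stated identity holds without restriction on $q$.

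The only step requiring minor attention is the reindexing, which relies on the convention that $\binom{z}{d-1}$ denotes the Stirling polynomial in a complex variable, so that $\binom{-(m-N)}{d-1}=\binom{N-m}{d-1}$ is a tautology rather than an independent identity. Beyond that, the corollary is a purely bookkeeping consequence of \refT{Niprod}, which is why it is stated as a corollary rather than proved from scratch.
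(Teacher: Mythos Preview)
Your proof is correct and follows essentially the same route as the paper's: specialize the Nishizawa product \eqref{niprod} at $z=N$, reindex the denominator by $m=N+n$, and observe that it cancels the tail of the numerator, leaving the finite product. For the extension to all $q\in\C$, the paper is slightly terser---it asserts directly that the righthand side is a polynomial in $q$ (which one sees from $\sum_{n=1}^N\binom{N-n}{d-1}=\binom{N}{d}$ canceling the prefactor's pole) and invokes analytic continuation---whereas you argue via rationality of both sides and the functional equation; but this is a difference of presentation, not of method.
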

\begin{proof} By the Nishizawa product for $G_q^{(d)}$
\begin{multline*}
(1-q)^{\binom{N}{d}}\,G_q^{(d)}(N+1)
=\frac{\prod_{n=1}^\infty\,(1-q^{n})^{\binom{N-n}{d-1}}}{\prod_{n=1}^\infty\,(1-q^{n+N})^{\binom{-n}{d-1}}}\\
=\frac{\prod_{n=1}^\infty\,(1-q^{n})^{\binom{N-n}{d-1}}}{\prod_{n=N+1}^\infty\,(1-q^{n})^{\binom{-(n-N)}{d-1}}}
=\prod_{n=1}^N\ (1-q^n)^{\binom{N-n}{d-1}}.
\end{multline*}
This gives a proof for $|q|<1$ but since the righthand side is a polynomial in $q$ the general case follows by analytic continuation.
\end{proof}
Note that by \eqref{nniprod} $G_q^{(d)}(N+1)=0$ for $N>n$ if $q$ is an $n$-th root of unity but this never affects Chern-Simons invariants since for them $n=k+N$ with positive integer $k$. This termination phenomenon sheds some light on why so far Chern-Simons invariants have only been defined for integral $z=N$, when it can be interpreted as the rank of a quantum group or an affine Lie algebra. To include complex $z$ one should probably study more involved algebraic/analytic structures. Reversing the perspective, we observe that termination is what makes $G_q$ meaningful in Chern-Simons theory that in its current form produces values only at roots of unity. Note that with the exception of $(1-q)^{-\binom{z}{d}}$ all anomaly terms in \eqref{qBalt} are required for termination. In particular, the ratio $\frac{(aq;q)_\infty^{(2)}}{(q;q)_\infty^{(2)}}$ with $q^{z}=a$ that we recover from Gromov-Witten/Donaldson-Thomas theory, does not suffice for duality to even make sense. It does not terminate for $z=N$ and becomes meaningless for $q=e^{2\pi i/(k+N)}$, which is where the Chern-Simons invariants are defined.

\section*{Conclusions}

Recently the phenomenon of {\it holography} has become prominent in physics \cite{Tr}. Roughly, the idea is that to every theory on a bulk space there corresponds an equivalent theory living on its boundary. The most famous example is the AdS/CFT correspondence of Maldacena but one can certainly trace the analogy back to the classical potential theory, where a harmonic function is recovered from its boundary values. We see a toy example of holography playing out on the {\it ranges} of Calabi-Yau invariants. The master-invariant is a holomorphic function in the bulk (the unit disk) and Donaldson-Thomas theory comes closest to being a bulk theory by giving its Taylor coefficients at $0$. Chern-Simons theory gives its values on the boundary (the unit circle) and Gromov-Witten theory also lives on the boundary but via asymptotic coefficients at $1$. 
Recall that $q=e^{ix}$ and in terms of the string coupling constant $x$ we are dealing with periodic holomorphic functions on the upper half-plane. Their Fourier coefficients are given by Donaldson-Thomas invariants and values at the {\it cusps of $SL_2\Z$} are given by Chern-Simons invariants. This brings to mind the classical {\it modular forms} \cite{Ap} and indeed the relationship between them and the corresponding boundary objects was interpreted recently as an example of the holographic correspondence \cite{MM}. Although our functions are not modular in the classical sense they do have some modular transformation properties \cite{LZg}. This links large $N$ duality to a well-known problem in analytic number theory and perhaps this can aid in its proof.

There are some serious difficulties to be resolved. First, values of a holomorphic function at roots of unity are not enough to recover it uniquely. There are plenty of holomorphic functions on the unit disk that vanish at all roots of unity. Take a modular cusp-form $f$ on the upper half-plane \cite{Ap} for example and consider $f(\ln q/2\pi i)$. The same is true of an asymptotic expansion at a point on the unit circle or even a collection of asymptotic expansions at every root of unity \cite{LZg}. In other words, absent extra data Chern-Simons and Gromov-Witten theories do not recover the master-invariant. Donaldson-Thomas theory has a converse problem: it sure gives a function on the unit disk but one that is singular at each point of the unit circle. For the resolved conifold we reconciled the results by 'completing a pattern' by hand but this will not work in general. A large part of the problem in proving equality of partition functions is that as mathematically defined they are not quite equal.

We contrived to make large $N$ duality work for the resolved conifold. What about other cases? Similarities between the topological vertex and the Reshetikhin-Turaev calculus outlined in Sections \ref{S4},\ref{S5} are promising but there is a vast difference between toric webs and link diagrams representing the dual objects. This ought to be expected of course since not all large $N$ duals to $3$-manifolds are toric and certainly most toric Calabi-Yaus are not dual to any $3$-manifold. However, not all is lost. Link invariants are known to extend to invariants of {\it knotted trivalent graphs} \cite{MO} and these include toric graphs that are also trivalent but planar. Then there is the case of lens spaces (cyclic quotients of $S^3$ \cite{Rf}) where the duals are known and toric \cite{AKMVm,IqK}. Moreover, there exist physical generalizations of the topological vertex to non-toric threefolds \cite{DFS} some of which are dual to non-cyclic spherical quotients.

Even if diagrammatic presentations are reconciled there is yet another hurdle to wrestle with. Topological vertex expressions are infinite sums over all partitions $\P$. Reshetikhin-Turaev expressions are finite sums over partitions in the $(N-1)\times k$ rectangle $\P_{N-1}^k$ and ad hoc tricks are required to turn them into functions of $q=e^{2\pi i/(k+N)}$. Note that the Reshetikhin-Turaev building blocks $W_{\lambda\mu}(N;q)$ are already expressed in a desirable form. It is when the {\it gluing rules} are applied that $q$ has to be specialized to a root of unity to render the sums finite. Analogous infinite sums naively diverge but so would sums of $\calW_{\lambda\mu}(q)$ in the topological vertex without the convergence factors like $(-a)^{|\lambda|}$ in \eqref{z'curW}. After the sum is performed it is a again possible to turn it into a function of $N,q$ up to framing and volume factors. Recall that values at roots of unity do not in themselves determine a holomorphic function on the unit disk appearing in the dualities, and Donaldson-Thomas theory as a bulk theory is incomplete. An enticing possibility is that Chern-Simons theory itself can be turned into a bulk theory.
\begin{quote}\noindent {\bf Conjecture:}\ There exists a {\it universal Chern-Simons TQFT} that assigns holomorphic functions of $q,z$ to links and $3$-manifolds with $q\in\D$. These functions extend to roots of unity in $q$ and specialize to ordinary Chern-Simons invariants at rank $N$ and level $k$ upon specializing to $z=N$, $q=e^{2\pi i/(k+N)}$ up to normalization factors. Partitions serve as colors of link components and the gluing rules are expressed via sums over all partitions. 
Donaldson-Thomas invariants of the dual Calabi-Yau threefolds are Taylor coefficients at $q=0$ of these functions and Gromov-Witten invariants are their asymptotic coefficients at $q=1$.
\end{quote}
This conjecture formalizes the idea that extra extension data for invariants comes from algebraic restrictions required by the axioms of TQFT \cite{At}. A corollary of this conjecture that seems to be within reach is that sums of the type 
$$\sum_{\lambda_1,\dots,\lambda_{n}\in\P_{N-1}^k}
\!\!\!\!\!\!\!W_{\lambda\lambda_1}W_{\lambda_1\lambda_2}\cdots W_{\lambda_n\mu}$$
can be represented as boundary values of sums $$\sum_{\lambda_1,\dots,\lambda_{n}\in\P}
\!\!\!\!\!W_{\lambda\lambda_1}W_{\lambda_1\lambda_2}\cdots W_{\lambda_n\mu}\ r^{|\lambda_1|+\cdots+|\lambda_{n}|},$$ where $r$ is a convergence factor. These and similar sums appear in Chern-Simons invariants of Seifert-fibered $3$-manifolds \cite{Mar1,Roz} and such representation would facilitate a proof of large $N$ duality for them.

\section*{Appendix: Stirling polynomials}

The {\it Stirling polynomials} \cite{Mc},\,I.2.11\, are defined as
\bee
\binom{z}{0}:=1\qquad\binom{z}{n}:=\frac{z(z-1)\cdots(z-n+1)}{n!},\quad n\geq1,
\eee
and reduce to the binomial coefficients when $z=N\geq n$ is a positive integer.
Their generating function is
\be\label{Stirgen}
\sum_{n=0}^\infty\binom{z}{n}\,t^n=(1+t)^z.
\ee
In agreement with \eqref{Stirgen} we always assume $\binom{N}{n}=0$ for $n<0$ or $n>N$. Negative $z$ is also allowed 
\be\label{Stir-}
\binom{-z}{n}=(-1)^n\,\binom{z+n-1}{n}.
\ee
Combining with the generating function \eqref{Stirgen} we have 
\be\label{Stir-gen}
\sum_{n=0}^\infty\binom{z+n-1}{n}\,t^n=(1-t)^{-z}.
\ee
The coefficients $s(n,k)$ in 
\be\label{Stirnum}
\binom{z}{n}=\sum_{k=0}^n\frac{s(n,k)}{n!}\,z^k;\qquad\binom{z+n-1}{n}=\sum_{k=0}^n(-1)^{n-k}\,\frac{s(n,k)}{n!}\,z^k
\ee
are known as the {\it Stirling numbers of the first kind} \cite{Mc},\,I.2.11\,. 
The following identity is easily derived via generating functions
\be\label{Stirid}
\binom{z}{n-1}+\binom{z}{n}=\binom{z+1}{n}.
\ee
For the convenience of the reader, we prove a harder one needed in \refT{Niprod}, namely
\be\label{StirBin}
\sum_{i=0}^d\,(-1)^{i+1}\,\binom{n+i-1}{n}\,\binom{z}{d-i}
=\begin{cases}\binom{z-n-1}{d-1}, &\,n\geq1\\\binom{z-1}{d-1}-\binom{z}{d}, &\,n=0.\end{cases}
\ee
\begin{proof}
Consider the generating function
\begin{multline*}
\sum_{d=0}^\infty\,t^d\left(\sum_{i=0}^d\,(-1)^{i+1}\,\binom{n+i-1}{n}\,\binom{z}{d-i}\right)
=\sum_{i,j=0}^\infty\,t^{i+j}\,(-1)^{i+1}\,\binom{n+i-1}{n}\binom{z}{j}\\
=\sum_{i=0}^\infty\,(-1)^{i+1}\,\binom{n+i-1}{n}\,t^{i}\cdot\sum_{j=0}^\infty\,\binom{z}{j}\,t^{j}.
\end{multline*}
The second factor is obviously $(1+t)^z$ by \eqref{Stirgen}. Assuming $n>0$ so that $\binom{n-1}{n}=0$ the first factor can be rewritten as
$$
t\sum_{i=1}^\infty\,\binom{i-1+(n+1)-1}{i-1}\,(-t)^{i-1}=t(1+t)^{-n-1}
$$
by \eqref{Stir-gen}. Multiplying them and expanding in the powers of $t$
$$
t(1+t)^{z-n-1}=\sum_{d=1}^\infty\binom{z-n-1}{d-1}\,t^d.
$$
We may conclude for $n\geq1$:
$$
\sum_{i=0}^d\,(-1)^{i+1}\,\binom{n+i-1}{n}\,\binom{z}{d-i}
=\begin{cases}\binom{z-n-1}{d-1}, &\,d\geq1\\0, &\,d=0.\end{cases}
$$
The case $n=0$ is special since $\binom{i-1}{0}=1$ by definition. Therefore, 
$$
\sum_{i=0}^\infty\,(-1)^{i+1}\,\binom{i-1}{0}\,t^{i}=-(1+t)^{-1}
$$ and expanding as above we obtain for $n=0$ and all $d$
$$
\sum_{i=0}^d\,(-1)^{i+1}\,\binom{0+i-1}{0}\,\binom{z}{d-i}=-\binom{z-1}{d}.
$$
By assumption we have $d\geq1$ so applying \eqref{Stirid}
$$
-\binom{z-1}{d}=\binom{z-1}{d-1}-\binom{z}{d}.
$$
\end{proof}

{

\small

}

\end{document}